\newtheorem{defn}{Definition}[section]
\newtheorem{lem}[defn]{Lemma}
\newtheorem{theo}[defn]{Theorem}
\newtheorem{cor}[defn]{Corollary}
\newtheorem{prob}[defn]{Question}
\numberwithin{equation}{section}
\newcommand\keywordsname{Key words}
\newcommand\AMSname{AMS subject classifications}
\newenvironment{@abssec}[1]{%
     \if@twocolumn
       \section*{#1}%
     \else
       \vspace{.05in}\footnotesize
       \parindent .2in
         {\upshape\bfseries #1. }\ignorespaces
     \fi}
     {\if@twocolumn\else\par\vspace{.1in}\fi}
\begin{document}

\title{A Sharp upper bound for the spectral radius of a nonnegative  matrix and applications\footnote{L. You's research is supported by the Zhujiang Technology New Star Foundation of Guangzhou (Grant No. 2011J2200090) and Program on International Cooperation and Innovation,
Department of Education, Guangdong Province (Grant No. 2012gjhz0007),
X.-D Zhang's research is supported by the NNSF of China (Grant No. 11271256).}}

\author{Lihua You$^{a,}$\footnote{{\it{Email address:\;}}ylhua@scnu.edu.cn}
\qquad Yujie Shu$^{a,}$\footnote{{\it{Email address:\;}}1020697000@qq.com. }
 \qquad Xiao-Dong Zhang$^{b,}$\footnote{{\it{Corresponding author:\;}}xiaodong@sjtu.edu.cn.}
 }\vskip.2cm
\date{{\small
$^{a}$ School of Mathematical Sciences, South China Normal University, Guangzhou, 510631, P.R. China\\
$^{b,}$ Department of Mathematics, Shanghai Jiao Tong University,  Shanghai, 200240, P.R. China\\
{\bf Dedicated to Professor Miroslav Fiedler  on the occasion of his 90th birthday}
}}

\maketitle

\noindent {\bf Abstract }
In this paper, we obtain
 a sharp upper bound for the spectral radius of  a nonnegative matrix. This result is
 used to  present upper bounds for the adjacency  spectral radius, the Laplacian spectral radius,  the signless Laplacian spectral radius, the distance spectral radius,
 the distance Laplacian spectral radius, the distance signless Laplacian spectral radius of  a graph or a digraph. These results are new or generalize some known results.

{\it \noindent {\bf AMS Classification:} } 05C50,  15A48

{\it \noindent {\bf Keywords:}}     Nonnegative matrix; Spectral radius; Graph; Digraph.

\section{Introduction}

\hskip.6cm
We begin by recalling some definitions.
Let $M$ be an $n\times n$  matrix, $\lambda_1, \lambda_2, \ldots, \lambda_n$ be the eigenvalues of $M$.
 It is obvious that the eigenvalues may be complex numbers since $M$ is not symmetric in general.
 We usually assume that $|\lambda_1|\geq |\lambda_2|\geq\ldots \geq |\lambda_n|$.
 The spectral radius of $M$ is defined as $\rho(M)=|\lambda_1|$, i.e., it is the largest modulus
 of the eigenvalues of $M$.   If $M$ is a nonnegative matrix, it follows from the Perron-Frobenius theorem that
 %$\lambda_1$ is a nonnegative number and
 the spectral radius $\rho(M)$ is a eigenvalue of $M$.
 If $M$ is a nonnegative irreducible  matrix, it follows from the Perron-Frobenius theorem that
 $\rho(M)=\lambda_1$ is simple.

Let $G=(V,E)$ be a simple graph with vertex set $V=V(G)=\{v_1,v_2,\ldots,v_n\}$ and edge set $E=E(G)$.
 The Laplacian matrix and the signless Laplacian matrix of $G$ are defined as
 $$L(G)=diag(G)-A(G), \hskip.3cm Q(G)=diag(G)+A(G),$$ respectively,
 where $A(G)=(a_{ij})$ is the adjacency matrix of $G$, $diag(G)=diag(d_1, d_2, \ldots, d_n)$ is the diagonal matrix of vertex degrees of $G$
 and $d_i$ is the degree of vertex $v_i$.
 The spectral radius of $A(G)$, $L(G)$ and $Q(G)$,  denoted by $\rho(G), \mu(G) $ and $q(G)$,
 are called the (adjacency) spectral radius of $G$, the   Laplacian spectral radius of $G$, and  the  signless Laplacian spectral radius of $G$, respectively.  In 1973, Fiedler \cite{fiedler1973}  studied the Lapacian spectra, in particular, the second small eigenvalue which is called algebra connectivity.  Since then, the Laplacian matrix have been extensively investigated. Further, Fiedler \cite{fiedler1991} gave an excellent survey for the Laplacian matrix.

Let $G=(V,E)$ be a connected graph with vertex set $V=V(G)=\{v_1,v_2,\ldots,v_n\}$ and edge set $E=E(G)$.
For $u,v\in V(G)$, the distance between $u$ and $v$, denoted by $d_G(u,v)$,
is the length of the shortest path connecting them in $G$.
The distance matrix of $G$ is the $n\times n$ matrix $\mathcal{D}(G)=(d_{ij})$ where $d_{ij}=d_G(v_i,v_j)$.
In fact, for $1\leq i\leq n$, the transmission of vertex $v_i$, $Tr_G(v_i)$ is just the $i$-th row sum
 of $\mathcal{D}(G)$.  So for convenience,  we also call $Tr_G(v_i)$  the distance degree of vertex $v_i$ in $G$,
 denoted by  $D_i$, that is, $D_i=\sum\limits_{j=1}^nd_{ij}=Tr_G(v_i)$.

 Let $Tr(G)=diag(D_1, D_2, \ldots, D_n)$ be the diagonal matrix of vertex transmissions of $G$.
  The distance Laplacian matrix and the distance signless Laplacian matrix of $G$ are
   the $n\times n$ matrix defined by Aouchiche and Hansen as \cite{2013A}
 $$\mathcal{L}(G)=Tr(G)-\mathcal{D}(G), \hskip.3cm \mathcal{Q}(G)=Tr(G)+\mathcal{D}(G).$$

 \noindent The spectral radius of $\mathcal{D}(G),$  $\mathcal{L}(G)$ and $\mathcal{Q}(G)$,  denoted by $\rho^{\mathcal{D}}(G)$,
 $\mu^{\mathcal{D}}(G)$ and  $q^{\mathcal{D}}(G)$, are called the distance spectral radius of $G$,
 the distance  Laplacian spectral radius of $G$, and  the distance signless Laplacian spectral radius of $G$, respectively.

Let $\overrightarrow{G}=(V, E)$ be a digraph,
where $V=V(\overrightarrow{G})=\{v_1, v_2,
\ldots, v_n \}$ and $E=E(\overrightarrow{G})$ are the  vertex set and arc set of $\overrightarrow{G}$,
respectively. A digraph $\overrightarrow{G}$ is simple if it has no loops and
multiple arcs. A digraph  $\overrightarrow{G}$ is strongly connected if for every
pair of vertices $v_i, v_j\in V(\overrightarrow{G})$, there are  directed paths from $v_i$
to $v_j$ and from $v_j$ to $v_i$. In this paper, we consider finite, simple digraphs.

Let $\overrightarrow{G}$ be a digraph.
Let $N^+_{\overrightarrow{G}}(v_i)=\{v_j\in V(\overrightarrow{G})|$ $(v_i, v_j) \in E(\overrightarrow{G})\}$
denote the set of the out-neighbors of $v_i$,
$d^+_i=|N^+_{\overrightarrow{G}}(v_i)|$ denote the out-degree of the vertex $v_i$ in $\overrightarrow{G}$.

 For a digraph $\overrightarrow{G}$, let $A(\overrightarrow{G})=(a_{ij})$ denote the adjacency matrix of $\overrightarrow{G}$,
 where $a_{ij}$ is equal to the number of arcs $(v_i, v_j)$.
 Let $diag(\overrightarrow{G})=diag(d^+_1, d^+_2, \ldots, d^+_n)$ be the diagonal matrix of vertex out-degrees of $\overrightarrow{G}$ and $$L(\overrightarrow{G})= diag(\overrightarrow{G})-A(\overrightarrow{G}), \hskip.3cm
 Q(\overrightarrow{G})= diag(\overrightarrow{G})+A(\overrightarrow{G})$$
 be the Laplacian matrix of $\overrightarrow{G}$ and the signless Laplacian matrix of $\overrightarrow{G}$, respectively.
 The spectral radius of $A(\overrightarrow{G}), L(\overrightarrow{G})$  and $Q(\overrightarrow{G})$,
  denoted by $\rho(\overrightarrow{G}), \mu(\overrightarrow{G})$ and $q(\overrightarrow{G})$,
  are called the (adjacency) spectral radius of $\overrightarrow{G}$, the Laplacian spectral radius of $\overrightarrow{G}$,
  and the signless Laplacian spectral radius of $\overrightarrow{G}$, respectively.

For $u,v\in V(\overrightarrow{G})$, the distance from $u$ to $v$, denoted by $d_{\overrightarrow{G}}(u,v)$,
is the length of the shortest directed path from $u$ to $v$ in ${\overrightarrow{G}}$.
For $u\in V({\overrightarrow{G}})$, the transmission of vertex $u$ in ${\overrightarrow{G}}$ is the sum of distances
 from $u$ to all other vertices of ${\overrightarrow{G}}$, denoted by $Tr_{{\overrightarrow{G}}}(u)$.

Let ${\overrightarrow{G}}$ be a strong connected digraph with vertex set $V({\overrightarrow{G}})=\{v_1, v_2, \ldots, v_n\}$.
The distance matrix of ${\overrightarrow{G}}$ is the $n\times n$ matrix $\mathcal{D}({\overrightarrow{G}})=(d_{ij})$
where $d_{ij}=d_{\overrightarrow{G}}(v_i,v_j)$.
In fact, for $1\leq i\leq n$, the transmission of vertex $v_i$, $Tr_{\overrightarrow{G}}(v_i)$ is just the $i$-th row sum
 of $\mathcal{D}(\overrightarrow{G})$.  So for convenience,
 we also call $Tr_{\overrightarrow{G}}(v_i)$  the distance degree of vertex $v_i$ in $\overrightarrow{G}$,
 denoted by  $D_i^+$, that is, $D_i^+=\sum\limits_{j=1}^nd_{ij}=Tr_{\overrightarrow{G}}(v_i)$.

 Let  $Tr(\overrightarrow{G})=diag(D_1^+, D_2^+, \ldots, D_n^+)$
 be the diagonal matrix of vertex transmissions of $\overrightarrow{G}$.
 The distance Laplacian matrix and the distance signless Laplacian matrix of ${\overrightarrow{G}}$ are the $n\times n$ matrices defined similar to the undirected graph by Aouchiche and Hansen as (\cite{2013A})
 $$\mathcal{L}({\overrightarrow{G}})=Tr({\overrightarrow{G}})-\mathcal{D}({\overrightarrow{G}}), \hskip.3cm
 \mathcal{Q}({\overrightarrow{G}})=Tr({\overrightarrow{G}})+\mathcal{D}({\overrightarrow{G}}).$$
 \noindent The   spectral radius of $\mathcal{D}({\overrightarrow{G}})$, $\mathcal{L}({\overrightarrow{G}})$ and
 $\mathcal{Q}({\overrightarrow{G}})$, denoted by $\rho^{\mathcal{D}}(\overrightarrow{G})$,
 $\mu^{\mathcal{D}}(\overrightarrow{G})$ and $q^{\mathcal{D}}(\overrightarrow{G})$,
 are called the distance  spectral radius of $\overrightarrow{G}$,
 the distance Laplacian spectral radius of $\overrightarrow{G}$
 and the distance signless Laplacian spectral radius of $\overrightarrow{G}$, respectively.

Let $G=(V,E)$ be a graph, for $v_i, v_j\in V$, if $v_i$ is adjacent to $v_j$,  we denote it by $i\sim j$.
Moreover,  we call $m_i=\frac{\sum\limits_{i\sim j}{d_j}}{d_i}$ the average degree of the neighbors of $v_i$.
%If $G$ is connected, we call $T_i=\sum\limits_{j=1}^n{d_{ij}D_j}$ the second distance degree of $v_i$ in $G$,
%where $D_i=\sum\limits_{j=1}^nd_{ij}=Tr_G(v_i)$ is the distance degree of vertex $v_i$ in $G$.
Let $\overrightarrow{G}=(V,E)$ be a digraph, for $v_i, v_j\in V$, if arc $(v_i, v_j)\in E$, we denote it by $i\sim j$.
Moreover, we call $m^{+}_i=\frac{\sum\limits_{i\sim j}{d^{+}_j}}{d^{+}_i}$ the average out-degree of the out-neighbors of $v_i$,
%and $m^{-}_i=\frac{\sum\limits_{j\sim i}{d^{+}_j}}{d^{+}_i}$ the average in-degree of the neighbors of $v_i$,
where $d^+_i$ is the out-degree of vertex $v_i$ in $\overrightarrow{G}$.
%If $\overrightarrow{G}$ is strong connected,
%we call $D^+_i=\sum\limits_{j=1}^{n}{d_{ij}}=Tr_{\overrightarrow{G}}(v_i)$ is the distance out-degree of vertex
% $v_i$ in $\overrightarrow{G}$.

A regular graph is a graph where every vertex has the same degree.
A {\it bipartite semi-regular}  graph is a bipartite graph $G=(U,V,E)$ for which every two vertices on the same side of the given bipartition have the same degree as each other.

So far, there are many results on the bounds of the  spectral radius of a matrix and a nonnegative matrix, the  spectral radius,
the Laplacian spectral radius, the signless Laplacian spectral radius, the distance spectral radius,
the distance Laplacian spectral radius and the distance signless Laplacian spectral radius of a graph and a digraph, see [1,3--13,15--17].
The following  are some results on the above spectral radii of graphs and digraphs in terms of
degree, average degree, out-degree and so on. %distance degree, the second distance degree, average out-degree, distance out-degree, the second distance out-degree

\vskip.1cm
$\rho(G)\leq \max \limits_{1\leq i\leq n}\big\{\sqrt{d_im_i}\big\}  $ \hskip11.2cm (1.1)  %\eqno(1.1)

\vskip.1cm
$ \mu(G)\leq \max\limits_{1\leq i\leq n}\big\{d_i+\sqrt{d_im_i}\big\} $ \hskip10.5cm (1.2)

\vskip.1cm
%\begin{equation}
$ q(G)\leq \max\limits_{1\leq i\leq n}\big\{d_i+\sqrt{d_im_i}\big\} $ \hskip10.55cm (1.3) %\end{equation}

\vskip.2cm
%\begin{equation}
$ q(\overrightarrow{G})\leq \max\limits_{1\leq i\leq n} \bigg\{d^+_i+\sqrt{\sum\limits_{j\sim i}{d^+_j}}\bigg\}$ \hskip9.96cm (1.4) %\end{equation}

\vskip.2cm
We can see that there are few results about the  distance Laplacian spectral radius of $G$,
the Laplacian spectral radius of $\overrightarrow{G}$, the distance Laplacian spectral radius of $\overrightarrow{G}$
and the distance signless Laplacian spectral radius of $\overrightarrow{G}$.
Maybe  one reason  is the  Laplacian matrix and the  distance Laplacian matrix are not nonnegative matrices.

In this paper, we obtain
 sharp upper  bound for the spectral radius of a matrix or a nonnegative matrix in Section 2,
and then we apply these bounds to various matrices associated with a graph or a digraph,
  obtain some new results or known results about various spectral radii,
  including  the (adjacency)  spectral radius, the Laplacian spectral radius,
  the signless Laplacian spectral radius, the distance spectral radius,
 the distance Laplacian spectral radius, the distance signless Laplacian spectral radius and so on.

\section{Main results}

\hskip.6cm In this section, we will obtain the sharp upper bound for the spectral radius of a (nonnegative) matrix,
The techniques used in this section is motivated by \cite{2010} et al.

\begin{theo}\label{T1}
Let $B=(b_{ij})$ be an  $n\times n$  nonnegative matrix, $l_i$ be the number  of the nonzero entries except for the diagonal entry of the $i$th row for any $i\in \{1,2,\ldots, n\}$, say, $l_i=|\{b_{ij} \hskip.1cm |\hskip.1cm  b_{ij}\not=0, j\in\{1,2,\ldots, n\}\backslash \{i\}\}|$, $X=(x_1,x_2,\dots,x_n)^T$ be the eigenvector of $B$ corresponding to the eigenvalue $\rho(B)$. Then
\begin{equation}\label{eq21}
\rho(B)\leq \max\limits_{1\leq i \leq n}\bigg\{b_{ii}+\sqrt{\sum\limits_{k=1, k\neq i}^{n}{l_kb_{ki}^2}} \bigg\}.
\end{equation}
Moreover, if the equality in (\ref{eq21}) holds, then   $b_{ii}+\sqrt{\sum\limits_{k=1, k\neq i}^{n}{l_{k}b_{ki}^2}}=b_{jj}+\sqrt{\sum\limits_{k=1, k\neq j}^{n}{l_{k}b_{kj}^2}}$ for any $i,j\in \{s\mid x_s \neq 0, 1\leq s\leq n\}$.
Furthermore, if $B$ is  irreducible,
and the equality in (\ref{eq21}) holds, then  $b_{ii}+\sqrt{\sum\limits_{k=1, k\neq i}^{n}{l_{k}b_{ki}^2}}=b_{jj}+\sqrt{\sum\limits_{k=1, k\neq j}^{n}{l_{k}b_{kj}^2}}$ for any $i,j\in \{1,2,\ldots,n\}$.
\end{theo}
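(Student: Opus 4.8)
The plan is to invoke the Perron--Frobenius theorem, which guarantees that $\rho(B)$ is an eigenvalue of the nonnegative matrix $B$; taking $X=(x_1,\dots,x_n)^T$ to be a real eigenvector for $\rho(B)$, we have $BX=\rho(B)X$. Writing $\rho=\rho(B)$ and reading off the $i$-th coordinate, I would isolate the diagonal contribution to get $(\rho-b_{ii})x_i=\sum_{j\neq i}b_{ij}x_j$. The sum on the right has at most $l_i$ nonzero summands, one for each nonzero off-diagonal entry of row $i$, so the Cauchy--Schwarz inequality against the all-ones vector gives
\[
(\rho-b_{ii})^2x_i^2\le l_i\sum_{j\neq i}b_{ij}^2x_j^2 .
\]

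The decisive step is then to sum this over all $i$ and interchange the order of summation: on the right one obtains $\sum_{j} x_j^2\sum_{k\neq j}l_k b_{kj}^2$, so the row-indexed estimates collapse into exactly the column-weighted sums $\sum_{k\neq j}l_k b_{kj}^2$ appearing in the statement. This produces the master inequality
\[
\sum_{j=1}^n x_j^2\Big[(\rho-b_{jj})^2-\sum_{k\neq j}l_k b_{kj}^2\Big]\le 0 .
\]
I expect that recognizing this transpose-by-summation trick — rather than bounding each row in isolation, which would yield a different, row-based estimate — will be the main obstacle. With the master inequality in hand the bound follows by contradiction: if $\rho>\max_j\{b_{jj}+\sqrt{\sum_{k\neq j}l_k b_{kj}^2}\}$, then for every $j$ we have $\rho-b_{jj}>\sqrt{\sum_{k\neq j}l_k b_{kj}^2}\ge 0$, so each bracket is strictly positive; since $X\neq 0$ some $x_j^2>0$ and the displayed sum would be strictly positive, a contradiction, so (\ref{eq21}) holds.

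For the equality statements, set $R_j=b_{jj}+\sqrt{\sum_{k\neq j}l_k b_{kj}^2}$, so that $\sqrt{\sum_{k\neq j}l_k b_{kj}^2}=R_j-b_{jj}$, and factor each bracket as a difference of squares:
\[
(\rho-b_{jj})^2-(R_j-b_{jj})^2=(\rho-R_j)(\rho+R_j-2b_{jj}) .
\]
When equality holds in (\ref{eq21}) we have $\rho=\max_j R_j\ge R_j$, and since $\rho\ge b_{jj}$ and $R_j\ge b_{jj}$ both factors are nonnegative; hence every summand of the master inequality is nonnegative while their total is nonpositive, forcing each summand to vanish. Thus $x_j\neq 0$ implies $(\rho-R_j)(\rho+R_j-2b_{jj})=0$, and a short check (the factor $\rho+R_j-2b_{jj}=(\rho-b_{jj})+(R_j-b_{jj})$ can vanish only when $\rho=b_{jj}=R_j$) shows either case forces $R_j=\rho$ for every index $j$ in the support $\{s\mid x_s\neq 0\}$; this is the ``Moreover'' conclusion. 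Finally, if $B$ is irreducible the Perron--Frobenius theorem makes $X$ strictly positive, so the support is all of $\{1,\dots,n\}$ and $R_i=R_j=\rho$ for all $i,j$, giving the ``Furthermore'' conclusion.
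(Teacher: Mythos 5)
Your proposal is correct and follows essentially the same route as the paper: the eigen-equation coordinatewise, Cauchy--Schwarz with the $l_i$ count of nonzero off-diagonal entries, summing over $i$ and interchanging the order of summation to obtain the column-weighted master inequality, and then forcing each summand to vanish in the equality case (the paper squares $\rho-b_{jj}\ge\sqrt{\sum_{k\ne j}l_kb_{kj}^2}\ge 0$ directly where you factor a difference of squares, but these are interchangeable). No gaps.
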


\begin{proof}
 For each $i\in \{1,2,\ldots, n\}$, by $BX=\rho(B)X,$  we have
%\begin{equation*}
$\rho(B)x_i=\sum\limits_{j=1}^{n}b_{ij}x_j,$ %=\sum\limits_{j, b_{ij}\neq0}{b_{ij}x_j},
%\end{equation*}
then
$$(\rho(B)-b_{ii})x_i=\sum\limits_{j\neq i, b_{ij}\neq0}{b_{ij}x_j},$$
and thus by Cauchy inequality, we have

\begin{equation*}
\begin{aligned}
(\rho(B)-b_{ii})^2x_i^2=(\sum\limits_{j\neq i,b_{ij}\neq0}{b_{ij}x_j})^2\leq l_{i}\sum\limits_{j\neq i, b_{ij}\neq 0}{(b_{ij}x_{j})^2}.
\end{aligned}
\end{equation*}
   Then
\begin{equation*}
\sum\limits_{i=1}^n{[(\rho(B)-b_{ii})x_i]^2}
\leq\sum\limits_{i=1}^n({l_{i}}{\sum\limits_{j\neq i,b_{ij}\neq0}{(b_{ij}x_j})^2})
 =\sum\limits_{i=1}^n(\sum\limits_{j\neq i,b_{ij}\neq0}l_ib_{ij}^2x_j^2)
 =\sum\limits_{i=1}^n[(\sum\limits_{k=1, k\neq i}^n{l_{k}b_{ki}^2})x_{i}^2],
\end{equation*}
thus we have
\begin{equation}\label{eq22}
\sum\limits_{i=1}^n{\big((\rho(B)-b_{ii})^2-\sum\limits_{k=1, k\neq i}^n{l_{k}b_{ki}^2}\big )x_{i}^2}\leq 0.
\end{equation}
Therefore there must exist some  $j\in\{1,2,\ldots, n\}$ such that
\begin{equation*}
(\rho(B)-b_{jj})^2-\sum\limits_{k\neq j}{l_{k}b_{kj}^2}\leq 0,
\end{equation*}
so
\begin{equation*}
\rho(B)\leq b_{jj}+\sqrt{\sum\limits_{k\neq j}{l_{k}b_{kj}^2}}\leq \max\limits_{1\leq i\leq n} \bigg\{b_{ii}+\sqrt{\sum\limits_{k\neq i}{l_{k}b_{ki}^2}}\bigg \}.
\end{equation*}
If $\rho(B)=\max\limits_{1\leq i\leq n} \bigg\{b_{ii}+\sqrt{\sum\limits_{k\neq i}{l_{k}b_{ki}^2}}\bigg\}$, then for any $j\in \{1,2,\ldots, n\}$,
we have $\rho(B)\geq b_{jj}+\sqrt{\sum\limits_{k\neq j}{l_{k}b_{kj}^2}},$
 then
\begin{equation}\label{eq23}
(\rho(B)-b_{jj})^2-\sum\limits_{k\neq j}{l_{k}b_{kj}^2}\geq 0,
\end{equation}
and thus
\begin{equation}\label{eq24}
\sum\limits_{j=1}^n{\big((\rho(B)-b_{jj})^2-\sum\limits_{k\neq j}{l_{k}b_{kj}^2}\big )x_{j}^2}\geq 0.
\end{equation}
Combining (\ref{eq22}) and (\ref{eq24}), it implies that  $$\sum\limits_{i=1}^n{\big((\rho(B)-b_{ii})^2-\sum\limits_{k\neq i}{l_{k}b_{ki}^2}\big )x_{i}^2}=0.$$
Noting that (\ref{eq23}) holds for any  $j\in \{1,2,\ldots, n\}$,
we have $(\rho(B)-b_{ii})^2-\sum\limits_{k\neq i}{l_{k}b_{ki}^2}=0$ for any $i\in \{s\mid x_s \neq0, 1\leq s\leq n\}$,
and thus
$b_{ii}+\sqrt{\sum\limits_{k\neq i}{l_{k}b_{ki}^2}}=b_{jj}+\sqrt{\sum\limits_{k\neq j}{l_{k}b_{kj}^2}}$
for any $i,j\in \{s\mid x_s \neq0, 1\leq s\leq n\}.$

Furthermore, if $B$ is irreducible, then $x_i>0$ for each $i\in \{1,2,\ldots, n\}$ by Perron-Frobenius theorem,
and thus   $b_{ii}+\sqrt{\sum\limits_{k=1, k\neq i}^{n}{l_{k}b_{ki}^2}}=b_{jj}+\sqrt{\sum\limits_{k=1, k\neq j}^{n}{l_{k}b_{kj}^2}}$ for any $i,j\in \{1,2,\ldots, n\}$ if the equality in (\ref{eq21}) holds.
\end{proof}

It is natural that we want to know under what conditions  the equality in (\ref{eq21}) holds.

\begin{prob}
Look for the  necessity and sufficiency conditions of the equality in (\ref{eq21})  holds.
\end{prob}

\begin{lem} (\cite{2005})\label{lem31}
Let $B=(b_{ij})$ be an $n\times n$  nonnegative irreducible matrix and $A=(a_{ij})$ is a complex matrix.
Let $|A|=(|a_{ij}|)$, if $b_{ij}\geq |a_{ij}|$ for any $i,j\in\{1,2,\ldots, n\}$, we denote by  $B\geq|A|$, then  $\rho(B)\geq\rho(A)$.
\end{lem}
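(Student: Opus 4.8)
The plan is to realize $\rho(A)$ through an eigenvector of $A$, pass to entrywise moduli to produce a subinvariance inequality for $B$, and then extract the scalar comparison using the positive left Perron eigenvector supplied by irreducibility.

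First I would choose an eigenvalue $\lambda$ of $A$ with $|\lambda|=\rho(A)$ and a corresponding eigenvector $y=(y_1,\ldots,y_n)^T\neq 0$, so that $Ay=\lambda y$. Writing $|y|=(|y_1|,\ldots,|y_n|)^T$ and using the triangle inequality together with the hypothesis $b_{ij}\geq|a_{ij}|$, for each $i$ I would estimate
$$\rho(A)|y_i|=|\lambda y_i|=\Big|\sum_{j=1}^n a_{ij}y_j\Big|\leq \sum_{j=1}^n |a_{ij}|\,|y_j|\leq \sum_{j=1}^n b_{ij}|y_j|.$$
In vector form this reads $\rho(A)|y|\leq B|y|$; that is, $|y|$ is a nonnegative nonzero vector obeying a subinvariance inequality with respect to $B$.

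Next I would invoke the Perron--Frobenius theorem for the irreducible nonnegative matrix $B$, which guarantees a strictly positive left eigenvector $w=(w_1,\ldots,w_n)^T>0$ satisfying $w^T B=\rho(B)w^T$. Multiplying the inequality $\rho(A)|y|\leq B|y|$ on the left by $w^T\geq 0$ gives
$$\rho(A)\,(w^T|y|)\leq w^T B|y|=\rho(B)\,(w^T|y|).$$
Since $w>0$ while $|y|$ is nonnegative and nonzero, the scalar $w^T|y|=\sum_i w_i|y_i|$ is strictly positive, so dividing through yields $\rho(A)\leq\rho(B)$, as claimed.

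The main obstacle is the final extraction step: the componentwise domination $B|y|\geq\rho(A)|y|$ does not by itself bound $\rho(B)$ from below for an arbitrary nonnegative $B$, and this is exactly where irreducibility is essential, since it is what furnishes the strictly positive left eigenvector $w$ that converts the vector inequality into a scalar comparison. A slightly more self-contained alternative would replace the left-eigenvector argument by the variational (Wielandt) characterization $\rho(B)=\max_{z\geq 0,\,z\neq 0}\ \min_{z_i>0}\,(Bz)_i/z_i$, applied to $z=|y|$; either route reduces the proof to standard Perron--Frobenius facts.
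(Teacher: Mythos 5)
Your argument is correct. The paper does not prove this lemma at all --- it is quoted from Horn and Johnson's \emph{Matrix Analysis} --- and the proof you give (pass to $|y|$ via the triangle inequality to get the subinvariance $\rho(A)|y|\leq B|y|$, then pair with the positive left Perron vector of the irreducible matrix $B$) is precisely the standard textbook argument for this fact, with the role of irreducibility correctly identified as supplying the strictly positive left eigenvector that turns the componentwise inequality into the scalar bound $\rho(A)\leq\rho(B)$.
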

By Lemma \ref{lem31} we know that for any connected graph $G$ and any strong connected digraph $\overrightarrow{G}$,
$\mu(G)\leq q(G)$ and $\mu(\overrightarrow{G})\leq q(\overrightarrow{G})$. In fact, we have

\begin{lem} (\cite{1979}) \label{lem32}
Let $G=(V, E)$ be a connected graph on $n$ vertices.
 Then $\mu(G)\leq q(G)$, with equality if and only if $G$ is a bipartite graph.
\end{lem}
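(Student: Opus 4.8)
The plan is to split the statement into the inequality and the two directions of the equality characterization. The inequality $\mu(G)\le q(G)$ is essentially already in hand from the discussion preceding the lemma: since $G$ is connected, $Q(G)$ is a nonnegative irreducible matrix, and $|L(G)|=Q(G)$ entrywise (the diagonals agree and $|-a_{ij}|=a_{ij}$). Hence Lemma \ref{lem31}, applied with $B=Q(G)$ and the comparison matrix $L(G)$, yields $q(G)=\rho(Q(G))\ge\rho(L(G))=\mu(G)$. So all the real work lies in the equality characterization.

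For the sufficiency direction (bipartite $\Rightarrow$ equality), I would introduce a diagonal signature matrix. Suppose $G$ is bipartite with parts $U$ and $W$, and set $S=\mathrm{diag}(s_1,\dots,s_n)$ with $s_i=1$ if $v_i\in U$ and $s_i=-1$ if $v_i\in W$. Since every edge joins $U$ to $W$, we have $s_is_j=-1$ whenever $a_{ij}\neq 0$, so $S\,A(G)\,S=-A(G)$; and because $S$ and $diag(G)$ are both diagonal, $S\,diag(G)\,S=diag(G)$. Therefore $S\,Q(G)\,S=diag(G)-A(G)=L(G)$, exhibiting $L(G)$ and $Q(G)$ as similar matrices with identical spectra, whence $\mu(G)=q(G)$.

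For the necessity direction (equality $\Rightarrow$ bipartite) I would argue through Rayleigh quotients. Let $x$ be a real unit eigenvector of $L(G)$ for $\mu(G)$, and write $|x|=(|x_1|,\dots,|x_n|)^T$, also a unit vector. Using the standard edge expansions $x^TL(G)x=\sum_{i\sim j}(x_i-x_j)^2$ and $y^TQ(G)y=\sum_{i\sim j}(y_i+y_j)^2$ together with the elementary bound $(x_i-x_j)^2\le(|x_i|+|x_j|)^2$, I would obtain
$$\mu(G)=\sum_{i\sim j}(x_i-x_j)^2\le\sum_{i\sim j}(|x_i|+|x_j|)^2=|x|^TQ(G)\,|x|\le q(G).$$
If $\mu(G)=q(G)$, both inequalities are tight. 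Tightness in the last step means $|x|$ attains the maximal Rayleigh quotient of $Q(G)$, so $|x|$ is a top eigenvector; since $Q(G)$ is nonnegative irreducible, Perron--Frobenius forces $|x|>0$, hence $x_i\neq 0$ for every $i$. Tightness in the middle step forces $|x_i-x_j|=|x_i|+|x_j|$ on each edge, which, given $x_i,x_j\neq 0$, means $x_i$ and $x_j$ have opposite signs. Partitioning $V$ into $\{v_i:x_i>0\}$ and $\{v_i:x_i<0\}$ then displays $G$ as bipartite.

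The main obstacle is this necessity direction. The subtle point is that the sign-opposition across edges only produces a genuine bipartition once one knows that \emph{no} coordinate of $x$ vanishes; and securing that vanishing-free property requires extracting Perron positivity of $|x|$ from the tightness of the Rayleigh estimate, which is exactly where the irreducibility of $Q(G)$ (i.e. connectedness of $G$) is indispensable.
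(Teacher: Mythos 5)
The paper does not prove this lemma at all: it is quoted from Berman--Plemmons \cite{1979} and used as a black box (the only related remark in the text is that the inequality $\mu(G)\le q(G)$ already follows from Lemma \ref{lem31}). So there is no internal proof to compare against; judged on its own, your argument is correct and complete. The inequality via Lemma \ref{lem31} with $|L(G)|=Q(G)$ is exactly the observation the authors themselves make. The sufficiency via the signature similarity $S\,Q(G)\,S=L(G)$ is the standard argument and is airtight. For the necessity, your chain $\mu(G)=\sum_{i\sim j}(x_i-x_j)^2\le\sum_{i\sim j}(|x_i|+|x_j|)^2=|x|^TQ(G)|x|\le q(G)$ is valid, and you correctly identify the one delicate point: tightness of the second inequality only yields opposite signs across an edge when both endpoints have nonzero coordinates, and that nonvanishing is exactly what Perron--Frobenius applied to the irreducible matrix $Q(G)$ (connectedness of $G$) delivers once $|x|$ is recognized as a maximizer of the Rayleigh quotient. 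One cosmetic remark: since every edge then joins a positive-sign vertex to a negative-sign vertex, the sign classes do give a bipartition, and connectedness with at least one edge guarantees both classes are nonempty; the degenerate case $n=1$ is vacuous. Your proof is a legitimate, self-contained replacement for the citation.
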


\begin{cor}\label{cor25}
Let $A=(a_{ij})$ be an  $n\times n$ complex irreducible matrix, $l_i$ be the number  of the nonzero entries except for the diagonal entry of the $i$th row for any $i\in \{1,2,\ldots, n\}$, say, $l_i=|\{a_{ij} \hskip.1cm |\hskip.1cm  a_{ij}\not=0, j\in\{1,2,\ldots, n\}\backslash \{i\}\}|$. Then
\begin{equation}\label{eq2400}
\rho(A)\leq \max\limits_{1\leq i \leq n}\bigg\{|a_{ii}|+\sqrt{\sum\limits_{k=1, k\neq i}^{n}{l_k|a_{ki}|^2}} \bigg\}.
\end{equation}
If the equality holds, then  $|a_{ii}|+\sqrt{\sum\limits_{k=1, k\neq i}^{n}{l_{k}|a_{ki}|^2}}
=|a_{jj}|+\sqrt{\sum\limits_{k=1, k\neq j}^{n}{l_{k}|a_{kj}|^2}}$ for any $i,j\in \{1,2,\ldots,n\}$.
\end{cor}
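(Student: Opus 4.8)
The plan is to reduce the complex case to the nonnegative case already settled in Theorem~\ref{T1} by passing to the entrywise modulus matrix $|A| = (|a_{ij}|)$. First I would observe that because $A$ is irreducible, its associated digraph (with an arc $i\to j$ exactly when $a_{ij}\neq 0$) is strongly connected; since $|a_{ij}|\neq 0$ if and only if $a_{ij}\neq 0$, the matrix $|A|$ has precisely the same zero--nonzero pattern. Hence $|A|$ is a nonnegative irreducible matrix, the off-diagonal nonzero counts $l_i$ for $|A|$ coincide with those for $A$, and $(|A|)_{ki}=|a_{ki}|$. In particular, the right-hand side of (\ref{eq2400}) is exactly the bound produced by Theorem~\ref{T1} when applied to $B=|A|$.

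Next I would combine two estimates into a single chain. Applying Lemma~\ref{lem31} with $B=|A|$ (the hypothesis $B\geq|A|$ holds trivially, with equality) gives $\rho(A)\leq\rho(|A|)$. Applying Theorem~\ref{T1} to the nonnegative irreducible matrix $|A|$ gives
$$\rho(|A|)\leq \max_{1\leq i\leq n}\bigg\{|a_{ii}|+\sqrt{\sum_{k=1,k\neq i}^{n} l_k|a_{ki}|^2}\bigg\}.$$
Concatenating these two inequalities yields (\ref{eq2400}) at once.

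For the equality statement, suppose equality holds in (\ref{eq2400}), so that $\rho(A)$ equals the maximum on the right. Since that maximum sits at the top of the chain $\rho(A)\leq\rho(|A|)\leq\max\{\cdots\}$, every inequality in the chain must in fact be an equality; in particular $\rho(|A|)$ equals the same maximum, i.e.\ equality holds in (\ref{eq21}) for the matrix $B=|A|$. I would then invoke the irreducible equality clause of Theorem~\ref{T1}: because $|A|$ is irreducible, that clause asserts that $b_{ii}+\sqrt{\sum_{k\neq i}l_k b_{ki}^2}$ is constant over all $i\in\{1,\dots,n\}$, which is precisely the claimed identity $|a_{ii}|+\sqrt{\sum_{k\neq i}l_k|a_{ki}|^2}=|a_{jj}|+\sqrt{\sum_{k\neq j}l_k|a_{kj}|^2}$ for all $i,j$.

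The argument is essentially routine once Theorem~\ref{T1} and Lemma~\ref{lem31} are in hand; there is no serious analytic obstacle. The one point requiring care is the equality analysis, where one must notice that the conclusion is carried entirely by $|A|$: the Perron eigenvector of $A$ itself plays no role, and instead the collapse of the inequality chain transfers the equality condition onto the irreducible nonnegative matrix $|A|$, whose Perron vector is strictly positive and hence forces constancy across \emph{all} indices rather than merely across the support of an eigenvector.
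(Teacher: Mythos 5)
Your argument is correct and is exactly the paper's proof: pass to the modulus matrix $|A|$, combine Lemma~\ref{lem31} with Theorem~\ref{T1}, and for equality note that the chain collapses so the irreducible equality clause of Theorem~\ref{T1} applies to $|A|$. The paper states this more tersely (omitting the equality analysis), but the route is the same.
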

\begin{proof}
Let $B=(|a_{ij}|)$, then $B$ is a nonnegative irreducible matrix. Thus  $\rho(A)\leq \rho (B)$ by  Lemma \ref{lem31},  and (\ref{eq2400}) holds by  Theorem \ref{T1}.
\end{proof}

%By Lemma \ref{lem31} and Theorem \ref{T2}, we have
%\begin{cor}\label{cor210}
%Let $A=(a_{ij})$ be an $n\times n$  real  irreducible matrix with $a_{ii}=0$ for  $i=1,2,\ldots,n$, and the row sum $r_1,r_2,\ldots,r_n$.
%Let $B=A+M$, where $M=diag(t_1,t_2,\ldots,t_n)$ with $t_i\geq0$ for any $i\in\{1,2,\ldots,n\}$, $s_i=\sum\limits_{j=1}^n{a_{ij}r_j}$,
%$\rho(B)$ be the spectral radius of $B$.
%Then
%\begin{equation}\label{eq224}
% \rho(B)\leq \max\limits_{1\leq i,j\leq n}\bigg\{\frac{t_i+t_j+\sqrt{(t_i-t_j)^2+\frac{4s_is_j}{r_ir_j}}}{2}, a_{ij}\not=0\bigg\}.
%\end{equation}
%If  the equality  in (\ref{eq224}) holds, then one of the two conditions holds:
%
%{\rm (i) } $t_i+\frac{s_i}{r_i}=t_j+\frac{s_j}{r_j}$ for any $i\in\{1,2,\ldots,n\}$;
%
%{\rm (ii) } There exists an integer $k$ with $1\leq k<n$  such that $B$ is a partitioned matrix as (\ref{eq26})
%and there exists $0<l<1$ such that $t_1+\frac{ls_1}{r_1}=\ldots=t_{k}+\frac{ls_{k}}{r_{k}}=t_{k+1}+\frac{s_{k+1}}{lr_{k+1}}=\ldots=t_{n}+\frac{s_n}{lr_n}$.
%\end{cor}

\section{Various spectral radii of a graph }
\hskip0.6cm  Let $G$ be a  graph,
 the adjacency  matrix $A(G)$, the Laplacian matrix $L(G)$,
  the signless Laplacian matrix $Q(G)$, and the (adjacency)  spectral radius $\rho(G)$, the Laplacian spectral radius $\mu(G)$,
  the signless Laplacian spectral radius $q(G)$ are defined as Section 1.
   Let $G$ be a connected graph, the distance matrix $\mathcal{D}(G)$,
 the distance Laplacian matrix $\mathcal{L}(G)$, the distance signless Laplacian matrix $\mathcal{Q}(G)$,
and  the distance spectral radius $\rho^{\mathcal{D}}(G)$,
 the distance Laplacian spectral radius $\mu^{\mathcal{D}}(G)$, the distance signless Laplacian spectral radius  $q^{\mathcal{D}}(G)$
 are defined as Section 1.
In this section, we will apply Theorems \ref{T1} to $A(G)$,  $Q(G)$, $\mathcal{D}(G)$ and $\mathcal{Q}(G)$,
and apply Corollary \ref{cor25} to $L(G)$ and $\mathcal{L}(G)$,
to obtain some new results or known results on the spectral radius.

\subsection{Adjacency spectral radius of  a graph }

\begin{lem}\label{lem33}
Let $G=(V,E)$ be a simple connected graph with vertex set $V=\{v_1, v_2 \ldots, v_n\}$. For any $v_i\in V$, the degree of $v_i$ and the average degree of the vertices adjacent to $v_i$ are denoted by $d_i$ and $m_i$, respectively. Then $d_1m_1=d_2m_2=\ldots =d_nm_n$ holds if and only if $G$ is a regular graph or a bipartite semi-regular graph.
\end{lem}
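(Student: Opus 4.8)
The plan is to first translate the condition into a statement purely about degrees. Since $m_i=\frac{\sum_{i\sim j}d_j}{d_i}$, we have $d_im_i=\sum_{i\sim j}d_j$, so the hypothesis $d_1m_1=\cdots=d_nm_n$ says exactly that the sum of the degrees of the neighbours of $v_i$ is a constant $c$ independent of $i$. The easy ($\Leftarrow$) direction is then a direct computation: if $G$ is $r$-regular, every vertex has $r$ neighbours each of degree $r$, so $\sum_{i\sim j}d_j=r^2$ for all $i$; if $G$ is bipartite semi-regular with the two sides having degrees $r$ and $s$, then each vertex on the $r$-side has $r$ neighbours all lying on the $s$-side (degree $s$), giving $rs$, and symmetrically each vertex on the $s$-side also gives $sr$, so again the value is constant.

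For the ($\Rightarrow$) direction I would argue from the extremal degrees. Write $\Delta=\max_i d_i$ and $\delta=\min_i d_i$. If $\Delta=\delta$ the graph is regular and we are done, so assume $\Delta>\delta$. The key step is to pin down the constant $c$: evaluating $c=\sum_{i\sim j}d_j$ at a vertex of maximum degree (which has $\Delta$ neighbours, each of degree at least $\delta$) gives $c\ge\Delta\delta$, while evaluating it at a vertex of minimum degree (which has $\delta$ neighbours, each of degree at most $\Delta$) gives $c\le\delta\Delta$. Hence $c=\Delta\delta$.

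Once $c=\Delta\delta$ is known, a simple averaging argument forces strong local structure. For any vertex $u$ of degree $\Delta$, the average degree of its neighbours equals $c/\Delta=\delta$; since every neighbour has degree at least $\delta$, all neighbours of $u$ must have degree exactly $\delta$. Symmetrically, every neighbour of a minimum-degree vertex has degree exactly $\Delta$. It remains to rule out intermediate degrees: if some vertex $p$ had $\delta<d_p<\Delta$, then $p$ could not be adjacent to any maximum-degree vertex (whose neighbours are all of degree $\delta$) nor to any minimum-degree vertex (whose neighbours are all of degree $\Delta$), so $p$ would only be adjacent to other intermediate-degree vertices; since vertices of degree $\Delta$ and of degree $\delta$ do exist, connectedness of $G$ would be violated. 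Therefore every vertex has degree $\Delta$ or $\delta$, edges run only between the $\Delta$-class and the $\delta$-class (no edge can stay inside a class, by the neighbour structure above), and so $G$ is a bipartite semi-regular graph.

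I expect the ($\Rightarrow$) direction to be the substantive part, and within it the main obstacle is the bookkeeping that converts the two extremal evaluations into the exact value $c=\Delta\delta$ and then excludes intermediate degrees; the averaging step and the bipartiteness conclusion are then routine. I would take some care that the connectivity hypothesis is genuinely used (it is what forbids an isolated block of intermediate-degree vertices) and that $\delta\ge 1$, which holds since $G$ is connected on at least two vertices.
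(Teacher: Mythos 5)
Your proof is correct, and it takes a recognizably different route from the paper's, even though both are extremal-degree arguments built on the observation that $d_im_i=\sum_{i\sim j}d_j$ is constant. The paper works locally: it takes a minimum-degree vertex $v_n$ with $d_n=r$, lets $s$ be the maximum degree \emph{among the neighbours of $v_n$} (not the global maximum), squeezes $rs\le d_{i_1}m_{i_1}=d_nm_n\le rs$ to conclude that all neighbours of $v_n$ have degree $s$ and all neighbours of those have degree $r$, propagates this alternation through the connected graph, and then disposes of the non-bipartite case by walking around an odd cycle to force $r=s$. You instead work globally with $\Delta$ and $\delta$: the two extremal evaluations pin down the constant as $c=\Delta\delta$, the averaging step forces every neighbour of an extremal vertex to have the opposite extremal degree, connectivity rules out intermediate degrees (since such a vertex could only see other intermediate vertices), and bipartiteness then falls out of the two-class structure with no odd-cycle argument at all --- when $\Delta>\delta$ no edge can lie inside either class, so the degree classes themselves are the bipartition. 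Your version buys a cleaner case split (regular iff $\Delta=\delta$, bipartite semi-regular otherwise) and makes explicit the exclusion of intermediate degrees, a point the paper's ``similar to the above arguments'' propagation leaves somewhat implicit; the paper's version needs only the local maximum $s$ and so its squeeze is slightly more self-contained at the first step, at the cost of the extra odd-cycle argument. Both uses of connectivity are genuine and correctly placed.
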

\begin{proof}
If $G$ is a regular graph or a bipartite semi-regular graph, we can check that $d_1m_1=d_2m_2=\ldots =d_nm_n$ holds immediately.

Conversely, let $d_1m_1=d_2m_2=\ldots =d_nm_n$ holds. Now we show that $G$ is a regular or a bipartite semi-regular graph.

Let vertex $v_n$  be the lowest degree vertex in $G$,  say, $d_n=\min\{d_i\hskip.1cm |\hskip.1cm 1\leq i\leq n\}$.
Let $d_n=r$, and  the neighbors of $v_n$ be $v_{i_1},v_{i_2},\ldots, v_{i_r}$.
Let $d_{i_1}=\max\{d_{i_j}\hskip.1cm |\hskip.1cm 1\leq j\leq r\}$, denoted by $s=d_{i_1}$.
Then $m_n\leq s$ by $m_i=\frac{\sum\limits_{i\sim j}d_j}{d_i}$ and thus $d_nm_n=rm_n\leq rs$.

On the other hand, for vertex $v_{i_1}$, we have $d_{i_1}m_{i_1}=sm_{i_1}\geq rs$,
then  $rs\leq d_{i_1}m_{i_1}=d_nm_n\leq rs$, thus $d_{i_1}m_{i_1}=d_nm_n=rs$, it implies $m_n= s$ and $m_{i_1}=r$.
Therefore  by the definitions of $s$ and $r$, we know $v_{i_1},v_{i_2},\dots,v_{i_r}$ must have the same degree, say, $s=d_{i_1}=d_{i_2}=\ldots=d_{i_r}$,
and all the neighbors of $v_{i_1}$ must have the same degree  $r$.

Similar to the above arguments, we can show that  the vertices with degree $r$  are  adjacent to the vertices with degree $s$,
 and the vertices with degree $s$  are  adjacent to the vertices with degree $r$  in $G$.

Now we assume that $G$ is not bipartite. Then $G$ has at least an odd cycle.
Let $C=v_{j_1}v_{j_2}\ldots v_{j_{2k-1}}v_{j_{2k}}v_{j_{2k+1}}v_{j_1}$ be an odd cycle of length $2k+1$ in $G$.
Clearly,  the degree of the vertex $v_{j_1}$, say $d_{j_1}$, is either $r$ or $s$.
Without loss of generality, we assume that $d_{j_1}=r$. Since the vertices with degree $r$ are adjacent to the vertices with degree $s$,
and the vertices with degree $s$  are  adjacent to the vertices with degree $r$, then
 $d_{j_2}=s$,  $d_{j_3}=r$, \ldots,  $d_{j_{2k-1}}=r$,  $d_{j_{2k}}=s$, $d_{j_{2k+1}}=r$ and $d_{j_{1}}=s$
 by $v_{j_{2k+1}}$ and $v_{j_1}$ are adjacent, thus $r=s$, it implies that $G$ is regular.

Hence the graph $G$ is a regular graph or a bipartite semi-regular graph.
\end{proof}

\begin{theo}\label{theo34}(\cite{1998}, Theorem 1.)
 Let $G=(V,E)$ be a simple graph on $n$ vertices. Then
$\rho(G)\leq \max\limits_{1\leq i\leq n}\sqrt{d_{i}m_{i}}.$
Moreover, if $G$ is a connected graph,  then the equality holds if and only if $G$ is a regular or bipartite semi-regular graph.
\end{theo}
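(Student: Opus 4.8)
The plan is to apply Theorem~\ref{T1} to the nonnegative matrix $B=A(G)$, and then to combine its equality clause with Lemma~\ref{lem33} for one direction and a direct eigenvector computation for the other.

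First I would specialize the general bound. Since $G$ is simple, $A(G)$ has zero diagonal, so $b_{ii}=0$ for every $i$; the count $l_k$ of nonzero off-diagonal entries in row $k$ is exactly the degree $d_k$; and each entry $a_{ki}\in\{0,1\}$ satisfies $a_{ki}^2=a_{ki}$. Hence
$$\sum_{k=1,\,k\neq i}^{n} l_k\,a_{ki}^2=\sum_{k\sim i} d_k=d_i m_i,$$
using $m_i=\big(\sum_{i\sim j}d_j\big)/d_i$. Substituting $b_{ii}=0$ and this identity into (\ref{eq21}) gives $\rho(G)\leq\max_{1\leq i\leq n}\sqrt{d_i m_i}$, the asserted inequality.

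For the equality characterization, suppose $G$ is connected, so that $A(G)$ is nonnegative and irreducible. If equality holds, the irreducible clause of Theorem~\ref{T1} says that $b_{ii}+\sqrt{\sum_{k\neq i}l_k b_{ki}^2}=\sqrt{d_i m_i}$ takes the same value for all $i$, i.e. $d_1 m_1=d_2 m_2=\cdots=d_n m_n$. By Lemma~\ref{lem33} this is equivalent to $G$ being a regular graph or a bipartite semi-regular graph, which settles the ``only if'' direction.

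For the ``if'' direction the equality clause of Theorem~\ref{T1} is only a necessary condition (this is precisely the gap recorded in the problem raised just after Theorem~\ref{T1}), so I would instead verify equality directly by exhibiting the Perron eigenvector. If $G$ is $r$-regular then $\rho(G)=r$ and $d_i m_i=r^2$ for every $i$, whence $\max_i\sqrt{d_i m_i}=r=\rho(G)$. If $G=(U,V,E)$ is bipartite semi-regular with the two sides of degrees $r$ and $s$, I would test the vector $x$ given by $x_i=\sqrt{d_i}$, that is, $\sqrt{r}$ on the degree-$r$ side and $\sqrt{s}$ on the degree-$s$ side. Because every vertex of degree $r$ has all of its neighbors of degree $s$ and conversely, a one-line check gives $A(G)x=\sqrt{rs}\,x$; since $x>0$ and $A(G)$ is irreducible, Perron--Frobenius identifies $\sqrt{rs}=\rho(G)$. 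The same adjacency structure forces $d_i m_i=rs$ for all $i$, so $\max_i\sqrt{d_i m_i}=\sqrt{rs}=\rho(G)$ and equality holds.

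The only delicate point is this ``if'' direction: since Theorem~\ref{T1} characterizes equality solely through the necessary condition that the row quantities agree, it cannot be invoked for sufficiency, and one must produce the explicit positive eigenvector $x_i=\sqrt{d_i}$ in the bipartite semi-regular case and appeal to Perron--Frobenius. Lemma~\ref{lem33} supplies the clean structural translation in the other direction, and all remaining manipulations are routine substitution.
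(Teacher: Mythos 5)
Your proposal is correct and follows essentially the same route as the paper: specialize Theorem~\ref{T1} to $A(G)$ using $b_{ii}=0$, $l_k=d_k$, and $\sum_{k\sim i}d_k=d_im_i$; deduce the ``only if'' direction from the equality clause plus Lemma~\ref{lem33}; and verify the ``if'' direction by the two cases (regular and bipartite semi-regular), showing $r$ resp.\ $\sqrt{rs}$ is an eigenvalue. The only difference is that you make explicit the Perron eigenvector $x_i=\sqrt{d_i}$ where the paper merely asserts ``it is easy to check,'' which is a welcome but inessential elaboration.
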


\begin{proof}
We apply
 Theorem \ref{T1} to $A(G)$. Since $b_{ii}=0$,
$b_{ij}=\left\{\begin{array}{cc}
                                                                     1, & \mbox{ if } v_i \mbox{ and } v_j \mbox{ are adjacent,} \\
                                                                     0, & \mbox{ otherwise, }
                                                                   \end{array}\right.$
 and $l_i=d_i$ for $i=1,2,\ldots,n$, then $b_{ii}+\sqrt{\sum\limits_{k\neq i}{l_{k}b_{ki}^2}}=\sqrt{d_im_i}$ for $i=1,2,\ldots,n$,
 and thus $\rho(G)\leq \max\limits_{1\leq i\leq n}\sqrt{d_{i}m_{i}}$ by (\ref{eq21}).

If $G$ is a connected graph, now we show  the equality holds if and only if $G$ is a regular or bipartite semi-regular graph.

  If $G$ is a connected graph and $\rho(G)= \max\limits_{1\leq i\leq n}\sqrt{d_{i}m_{i}}$,
  then  $\sqrt{d_1m_1}=\sqrt{d_2m_2}=\ldots=\sqrt{d_nm_n}$ by Theorem \ref{T1}, and thus $d_1m_1=d_2m_2=\ldots=d_nm_n$.
  Therefore,  $G$ is a regular or bipartite semi-regular graph by Lemma \ref{lem33}.

 On the other hand,  if $G$ is connected and $G$ is a regular or bipartite semi-regular graph, then $d_1m_1=d_2m_2=\ldots=d_nm_n$ by Lemma \ref{lem33},
  thus $\sqrt{d_1m_1}=\sqrt{d_2m_2}=\ldots=\sqrt{d_nm_n}$,
  and $\rho(G)\leq \max\limits_{1\leq i\leq n}\sqrt{d_{i}m_{i}}=\sqrt{d_1m_1}$.%\sqrt{rs}$.
Then we complete the proof by the following two cases.

{\bf Case 1: }  $G$ is a regular graph with degree $r$.

It is well known that $r=\sqrt{d_1m_1}$ is a eigenvalue of $G$, so $\sqrt{d_1m_1}\leq \rho(G)$.
Thus  $\rho(G)=\max\limits_{1\leq i\leq n}\sqrt{d_{i}m_{i}}=r$ by $\rho(G)\leq\sqrt{d_1m_1}=r$.

{\bf Case 2: }  $G$ is a bipartite semi-regular graph.

We assume that  the two  bipartition of $G$ with degree $r$ and $s$, respectively.
It is easy to check that $\sqrt{rs}=\sqrt{d_1m_1}$ is a eigenvalue of $G$, so $\sqrt{rs}\leq \rho(G)$.
Thus  $\rho(G)=\max\limits_{1\leq i\leq n}\sqrt{d_{i}m_{i}}=\sqrt{rs}$ by $\rho(G)\leq\sqrt{d_1m_1}=\sqrt{rs}$.
\end{proof}

\subsection{(Signless) Laplacian spectral radius of   a graph }
\begin{lem}(\cite{2005}) \label{lem23}
Let $A$ be a nonnegative matrix with the spectral radius $\rho(A)$ and the row sum $r_1,r_2,\ldots,r_n$. Then
%\begin{displaymath}
$\min\limits_{1\leq i\leq n}r_i\leq\rho(A)\leq \max\limits_{1\leq i\leq n}r_i.$
%\end{displaymath}
Moreover, if $A$ is a irreducible matrix, the one of equalities holds if and only if the row sums of $A$ are all equal.
\end{lem}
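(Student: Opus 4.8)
The plan is to invoke the Perron--Frobenius theorem to produce a nonnegative eigenvector $X=(x_1,\dots,x_n)^T\ge 0$ with $X\neq 0$ and $AX=\rho(A)X$, and then to read off the two bounds by evaluating the eigenvalue equation at the coordinates where $X$ is largest and smallest. First I would establish the upper bound in full generality. Let $t$ be an index with $x_t=\max_i x_i$; since $X\neq 0$ we have $x_t>0$. The $t$-th component of $AX=\rho(A)X$ gives $\rho(A)x_t=\sum_j a_{tj}x_j\le \sum_j a_{tj}x_t=r_t x_t\le(\max_i r_i)x_t$, and dividing by $x_t>0$ yields $\rho(A)\le\max_i r_i$.

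For the lower bound I would treat the irreducible case directly and then reduce the general case to it. When $A$ is irreducible, Perron--Frobenius gives $X>0$, so I may choose $s$ with $x_s=\min_i x_i>0$ and run the symmetric computation $\rho(A)x_s=\sum_j a_{sj}x_j\ge\sum_j a_{sj}x_s=r_s x_s\ge(\min_i r_i)x_s$, giving $\rho(A)\ge\min_i r_i$. For a merely nonnegative $A$ this same trick can fail, since the Perron eigenvector may have zero coordinates and the minimal-coordinate step no longer controls a full row sum; I expect this to be the main obstacle. I would circumvent it by a perturbation: the matrix $A_\varepsilon=A+\varepsilon J$, with $J$ the all-ones matrix, is strictly positive and hence irreducible, its row sums are $r_i+n\varepsilon$, so the irreducible bound gives $\min_i r_i+n\varepsilon\le\rho(A_\varepsilon)\le\max_i r_i+n\varepsilon$; letting $\varepsilon\to 0^+$ and using continuity of the spectral radius in the entries recovers both inequalities for $A$.

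Finally I would analyze the equality case under irreducibility. Suppose $\rho(A)=\max_i r_i$. Then every inequality in the upper-bound computation must be an equality; in particular $\sum_j a_{tj}x_j=\sum_j a_{tj}x_t$, and since $x_j\le x_t$ and $a_{tj}\ge 0$ this forces $x_j=x_t$ for every $j$ with $a_{tj}>0$. Irreducibility, i.e. strong connectivity of the digraph of $A$, then lets me propagate this equality along directed paths: repeating the argument at each such $j$ (which also attains the maximum) shows all coordinates reachable from $t$ equal $x_t$, so $X$ is a constant vector. Substituting $X=c\mathbf{1}$ into $AX=\rho(A)X$ gives $r_i=\rho(A)$ for all $i$, i.e. all row sums are equal; the same reasoning applied to the smallest coordinate handles $\rho(A)=\min_i r_i$. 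Conversely, if all row sums equal a common value $r$, then $A\mathbf{1}=r\mathbf{1}$ exhibits $r$ as an eigenvalue, whence $\min_i r_i=r\le\rho(A)\le\max_i r_i=r$ forces equality in both bounds, completing the proof.
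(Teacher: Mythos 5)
Your proof is correct. Note, however, that the paper does not actually prove this lemma: it is quoted as a known result with a citation to Horn and Johnson's \emph{Matrix Analysis}, so there is no in-paper argument to compare against. What you have written is essentially the classical textbook proof --- extract a nonnegative Perron eigenvector, evaluate the eigenvalue equation at a maximal (resp.\ minimal) coordinate to get the two bounds, handle the possibly-zero coordinates of the eigenvector in the reducible case by perturbing to $A+\varepsilon J$ and using continuity of the spectral radius, and settle the equality case by forcing equality through the chain of estimates and propagating constancy of the eigenvector along the strongly connected digraph of $A$. All the steps check out, including the one genuinely delicate point you correctly identified (the lower bound can fail to follow from the naive eigenvector argument when $A$ is reducible, since the Perron vector may vanish at the minimizing coordinate), and the converse direction via $A\mathbf{1}=r\mathbf{1}$. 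This is a complete and self-contained justification of a statement the paper leaves to the literature.
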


\begin{lem}\label{lem37}
Let $G=(V,E)$ be a  simple connected graph with vertex set $V=\{v_1,v_2,\ldots,v_n\}$. For any $v_i\in V$, the degree of $v_i$ and the average degree of the vertices adjacent to $v_i$ are denoted by $d_i$ and $m_i$, respectively. Then $d_1+\sqrt{d_1m_1}=d_2+\sqrt{d_2m_2}=\ldots =d_n+\sqrt{d_nm_n}$ holds if and only if $G$ is a regular graph.
\end{lem}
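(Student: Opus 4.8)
The plan is to prove the nontrivial implication, since the forward direction is immediate: if $G$ is $r$-regular then $d_i=m_i=r$ for every $i$, whence $d_i+\sqrt{d_im_i}=2r$ is constant. So I assume the common value $c=d_i+\sqrt{d_im_i}$ is independent of $i$ and aim to conclude regularity. First I would record the identity $d_im_i=\sum_{j\sim i}d_j$, so that the size of $\sqrt{d_im_i}$ is controlled by the degrees of the neighbors of $v_i$; the whole argument then reduces to pushing this quantity against the extreme degrees in $G$.

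The core step is an extremal-vertex comparison in the spirit of Lemma~\ref{lem33}. Let $v$ be a vertex of minimum degree, $d_v=r=\min_i d_i$, and let $u$ be a neighbor of $v$ of largest degree among the neighbors of $v$, with $d_u=s$. Since every neighbor of $v$ has degree between $r$ and $s$, I get $m_v\le s$ and hence $d_v+\sqrt{d_vm_v}\le r+\sqrt{rs}$; since every neighbor of $u$ has degree at least $r$, I get $m_u\ge r$ and hence $d_u+\sqrt{d_um_u}\ge s+\sqrt{sr}$. Substituting these into the hypothesis $d_v+\sqrt{d_vm_v}=d_u+\sqrt{d_um_u}$ yields $r+\sqrt{rs}\ge s+\sqrt{rs}$, i.e. $r\ge s$; as $s\ge r$ by minimality of $r$, this forces $s=r$. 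Thus the largest degree occurring among the neighbors of $v$ equals the minimum degree, so every neighbor of $v$ has degree exactly $r$.

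From here I would propagate the conclusion using connectivity. Each neighbor of $v$ now has degree $r$ and is therefore itself a minimum-degree vertex, so the same comparison shows all of \emph{its} neighbors have degree $r$; iterating this along paths out of $v$ and invoking the connectedness of $G$, I conclude that every vertex has degree $r$, i.e. $G$ is regular. This closes the equivalence.

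The main obstacle I anticipate is arranging the extremal comparison so that the two one-sided bounds $m_v\le s$ and $m_u\ge r$ combine into the single clean inequality $r\ge s$, and making sure the choice of $u$ as the maximum-degree neighbor of $v$ is used correctly on both sides. This is precisely where the additive term $d_i$ does the work that $d_im_i$ alone cannot: for a genuine bipartite semi-regular graph with parts of degrees $r\ne s$ the quantity $d_im_i=rs$ is constant (which is why that case survives in Lemma~\ref{lem33}), whereas $d_i+\sqrt{d_im_i}$ takes the two distinct values $r+\sqrt{rs}$ and $s+\sqrt{rs}$. I would therefore emphasize that the step $r+\sqrt{rs}=s+\sqrt{rs}\Rightarrow r=s$ is exactly what rules out the bipartite semi-regular graphs and leaves only the regular ones.
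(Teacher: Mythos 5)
Your proposal is correct and follows essentially the same route as the paper: the same extremal comparison between a minimum-degree vertex $v$ and its maximum-degree neighbor $u$, yielding $s+\sqrt{rs}\le r+\sqrt{rs}$ and hence $r=s$, followed by propagation via connectivity. Your propagation step (every neighbor of $v$ is again a minimum-degree vertex, so iterate) is a slightly more direct phrasing of the paper's "similar arguments" remark, but the substance is identical.
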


\begin{proof}
If $G$ is a regular graph, we can check  $d_1+\sqrt{d_1m_1}=d_2+\sqrt{d_2m_2}=\ldots =d_n+\sqrt{d_nm_n}$ holds immediately.

Conversely, let $d_1+\sqrt{d_1m_1}=d_2+\sqrt{d_2m_2}=\ldots =d_n+\sqrt{d_nm_n}$ holds. Now we show $G$ is a regular graph.

Let vertex $v_n$  be the lowest degree vertex in $G$,  say, $d_n=\min\{d_i\hskip.1cm |\hskip.1cm 1\leq i\leq n\}$.
Let $d_n=r$, and  the neighbors of $v_n$ be $v_{i_1},v_{i_2},\ldots, v_{i_r}$.
Let $d_{i_1}=\max\{d_{i_j}\hskip.1cm |\hskip.1cm 1\leq j\leq r\}$, denoted by $s=d_{i_1}$.
It is obvious that $r\leq s$, $m_n\leq s$ by $m_i=\frac{\sum\limits_{i\sim j}d_j}{d_i}$, and %$d_nm_n=rm_n\leq rs$,
thus $d_n+\sqrt{d_nm_n}\leq r+\sqrt{rs}.$

On the other hand, for vertex $v_{i_1}$, we have $d_{i_1}m_{i_1}=sm_{i_1}\geq rs$,
then  $d_{i_1}+\sqrt{d_{i_1}m_{i_1}}\geq s+\sqrt{rs}$.
Thus $s+\sqrt{rs}\leq d_{i_1}+\sqrt{d_{i_1}m_{i_1}}=d_n+\sqrt{d_nm_n}\leq r+\sqrt{rs}\leq s+\sqrt{rs}$,
it implies $m_n= s$, $m_{i_1}=r$ and $r=s$.
Therefore  by the definitions of $s$ and $r$, we know $v_{i_1},v_{i_2},\dots,v_{i_r}$ must have the same degree, say, $s=d_{i_1}=d_{i_2}=\ldots=d_{i_r}$,
and all the neighbors of $v_{i_1}$ must have the same degree  $r(=s)$.

Similar to the above arguments, we can show that  the vertices with degree $r$  are  adjacent to the vertices with degree $s$,
 and the vertices with degree $s$  are  adjacent to the vertices with degree $r$  in $G$. Then $G$ is a regular graph by $r=s$.
\end{proof}

\begin{theo}\label{thm38} Let $G=(V,E)$ be a simple graph on $n$ vertices. Then

(i)\cite{2010} $q(G)\leq \max\limits_{1\leq i\leq n}\big\{d_{i}+\sqrt{d_{i}m_{i}}\big\}.$
Moreover, if $G$ is a connected graph, the equality holds  if and only if $G$ is a regular graph.

(ii)\cite{2004Zhang} If $G$ is a connected graph, then $\mu(G)\leq \max\limits_{1\leq i\leq n}\big\{d_{i}+\sqrt{d_{i}m_{i}}\big\},$
 and  the equality holds if and only if  $G$ is a bipartite regular graph.
\end{theo}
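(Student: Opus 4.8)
The plan is to prove (i) by applying Theorem~\ref{T1} to the signless Laplacian $Q(G)=diag(G)+A(G)$, which is a nonnegative matrix, and then to obtain (ii) by reducing it to (i) via the comparison $\mu(G)\le q(G)$ furnished by Lemma~\ref{lem32}.

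For (i), I would first read off the entries of $B=Q(G)$: here $b_{ii}=d_i$, while $b_{ij}=1$ exactly when $v_i\sim v_j$ and $b_{ij}=0$ otherwise, so that $l_i=d_i$ for every $i$. With these values the quantity on the right-hand side of (\ref{eq21}) simplifies to
\[
b_{ii}+\sqrt{\sum_{k\neq i} l_k b_{ki}^2}=d_i+\sqrt{\sum_{k\sim i} d_k}=d_i+\sqrt{d_im_i},
\]
using $\sum_{k\sim i}d_k=d_im_i$, and the bound $q(G)\le\max_{1\le i\le n}\{d_i+\sqrt{d_im_i}\}$ follows immediately from Theorem~\ref{T1} (no connectivity is needed for the inequality). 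For the equality characterization, I would use that a connected $G$ makes $Q(G)$ irreducible, so the irreducible clause of Theorem~\ref{T1} applies: equality forces $d_i+\sqrt{d_im_i}$ to be constant in $i$, which by Lemma~\ref{lem37} occurs precisely when $G$ is regular. Conversely, if $G$ is $r$-regular then $Q(G)=rI+A(G)$ has spectral radius $r+r=2r$, while the bound equals $r+\sqrt{r\cdot r}=2r$, so equality holds.

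For (ii), I would chain two inequalities: Lemma~\ref{lem32} gives $\mu(G)\le q(G)$, and part (i) gives $q(G)\le M:=\max_{1\le i\le n}\{d_i+\sqrt{d_im_i}\}$, whence $\mu(G)\le M$. (One could instead apply Corollary~\ref{cor25} directly to the non-nonnegative matrix $L(G)$, whose absolute-value matrix $|L(G)|$ has the same pattern as $Q(G)$; but the chaining route is cleaner, since the equality analysis needs Lemma~\ref{lem32} in any case.) If $\mu(G)=M$, the sandwich $\mu(G)\le q(G)\le M=\mu(G)$ collapses, forcing both $\mu(G)=q(G)$ and $q(G)=M$. The first equality yields that $G$ is bipartite by Lemma~\ref{lem32}, and the second yields that $G$ is regular by part (i); hence $G$ is bipartite regular. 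Conversely, if $G$ is bipartite and regular, then $\mu(G)=q(G)$ by Lemma~\ref{lem32} and $q(G)=M$ by part (i), so $\mu(G)=M$.

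I expect the routine computations (checking $l_i=d_i$ and $\sum_{k\sim i}d_k=d_im_i$, and identifying the spectrum of a regular graph) to be straightforward. The one point requiring genuine care is the equality characterization in (ii): I must verify that the middle term of the sandwich is squeezed tightly enough that Lemma~\ref{lem32} and part (i) can both be invoked at once, and that the two converse computations recombine to give exactly the bipartite-regular class rather than a larger family.
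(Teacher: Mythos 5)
Your proposal is correct and follows essentially the same route as the paper: part (i) is Theorem~\ref{T1} applied to $Q(G)$ with $l_i=d_i$ and $\sum_{k\sim i}d_k=d_im_i$, the equality case handled via the irreducibility clause together with Lemma~\ref{lem37} (the paper verifies the converse with the row-sum Lemma~\ref{lem23} rather than the spectrum of $rI+A(G)$, an immaterial difference), and part (ii) is obtained from Lemma~\ref{lem32} and part (i) exactly as in the sandwich argument you give, which is the argument the paper sketches and omits.
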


\begin{proof}
Firstly, we show (i) holds.

We apply  Theorem \ref{T1} to $Q(G)$. Since $b_{ii}=d_i$,
                                        $b_{ij}=\left\{\begin{array}{cc}
                                                                     1, & \mbox{ if } v_i \mbox{ and } v_j \mbox{ are adjacent;} \\
                                                                     0, & \mbox{ otherwise, }
                                                                   \end{array}\right.$
$l_i=d_i$ for  $i=1,2,\ldots,n$,  then we have $b_{ii}+\sqrt{\sum\limits_{k\neq i}{l_{k}b_{ki}^2}}=d_i+\sqrt{d_im_i}$ for $i=1,2,\ldots,n$, and thus $q(G)\leq \max\limits_{1\leq i\leq n}\big\{d_{i}+\sqrt{d_{i}m_{i}}\big\}$ by (\ref{eq21}).

Now we show if $G$ is a connected graph,  then the equality holds if and only if $G$ is regular.

If $G$ is a connected graph and $q(G)=\max\limits_{1\leq i\leq n}\big\{d_{i}+\sqrt{d_{i}m_{i}}\big\}$, then  $d_1+\sqrt{d_1m_1}=d_2+\sqrt{d_2m_2}=\ldots=d_n+\sqrt{d_nm_n}$ by Theorem \ref{T1},
and thus $G$ is a regular graph by Lemma \ref{lem37}.

On the other hand,  if $G$ is connected and $G$ is a regular  graph with degree $r$, then $d_1+\sqrt{d_1m_1}=d_2+\sqrt{d_2m_2}=\ldots=d_n+\sqrt{d_nm_n}=2r$  by Lemma \ref{lem37} and
$\max\limits_{1\leq i\leq n}\big\{d_{i}+\sqrt{d_{i}m_{i}}\}=2r$.
It is well known that $q(G)=2r$  by Lemma \ref{lem23},
so  $q(G)=\max\limits_{1\leq i\leq n}\big\{d_{i}+\sqrt{d_{i}m_{i}}\big\}$.

 Similar to the proof of (i), by  Corollary \ref{cor25}, Lemma \ref{lem32} and  the result of (i), we can show (ii) immediately, so we omit it.
\end{proof}

%\begin{prop}\label{prop39} Let $G=(V,E)$ be a simple connected graph on $n$ vertices. Then

%(i) (\cite{2010}, Theorem 2.) $\mu(G)\leq \max\limits_{1\leq i\leq n}\big\{d_{i}+\sqrt{d_{i}m_{i}}\big\},$
%with equality  if and only if $G$ is bipartite regular or semi-regular.

%(ii) (\cite{2004Zhang}, Theorem 1.2.)  $q(G)\leq \max\limits_{1\leq i\leq n}\big\{d_{i}+\sqrt{d_{i}m_{i}}\big\}.$
%Moreover, if G is a k-regular, bipartite regular or semi-regular graph the equality holds.
%\end{prop}

%Comparing the results of Theorem \ref{thm38} and Proposition \ref{prop39},
%we can see that there are different on the conditions when the equality holds.
%In fact, if $G$ is a  bipartite semi-regular graph,
%we assume that  the two  bipartition of $G$ with degree $r$ and $s$, respectively, where $r<s$.
%It is easy to check that $\mu(G)=q(G)=r+s$.
%But
%$$\max\limits_{1\leq i\leq n}\{d_{i}+\sqrt{d_{i}m_{i}}\}
%=\max\limits_{1\leq i\leq n}\{r+\sqrt{rs}, s+\sqrt{sr}\}=s+\sqrt{rs}>r+s=q(G)=\mu(G).$$
%Thus the condition ``bipartite semi-regular" in (i) and (ii) of Proposition \ref{prop39} should be taken out.

\subsection{Distance spectral radius of a graph }

\begin{theo}\label{thm314}
Let $G=(V,E)$ be a connected graph on $n$ vertices. Then
\begin{equation}\label{eq33}
\begin{aligned}
&\rho^D(G)\leq \max\limits_{1\leq i\leq n}\sqrt{(n-1)\sum\limits_{k=1}^nd^2_{ki}}.
\end{aligned}
\end{equation}
If the equality holds, then $\sum\limits_{k=1}^nd^2_{ki}=\sum\limits_{k=1}^nd^2_{kj}$  for any $i,j\in\{1,2,\ldots,n\}$.\\
\end{theo}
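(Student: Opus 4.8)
The plan is to apply Theorem \ref{T1} directly to the distance matrix $B = \mathcal{D}(G) = (d_{ij})$, exactly in the spirit of the proofs of Theorem \ref{theo34} and Theorem \ref{thm38}. The distance matrix is nonnegative, so the hypotheses of Theorem \ref{T1} are met. The first observation I would record is the structure of the entries: the diagonal entries are $b_{ii} = d_{ii} = 0$, and since $G$ is connected every off-diagonal distance $d_{ij}$ is a positive integer. Consequently each off-diagonal entry of $\mathcal{D}(G)$ is nonzero, so $l_i = n-1$ for every $i \in \{1,2,\ldots,n\}$.

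With $b_{ii}=0$ and $l_k = n-1$, the quantity appearing in (\ref{eq21}) simplifies. For each $i$,
$$b_{ii} + \sqrt{\sum_{k=1,\,k\neq i}^n l_k b_{ki}^2} = \sqrt{(n-1)\sum_{k=1,\,k\neq i}^n d_{ki}^2} = \sqrt{(n-1)\sum_{k=1}^n d_{ki}^2},$$
where the last equality uses that the omitted $k=i$ term contributes $d_{ii}^2 = 0$ and may therefore be reinstated. Taking the maximum over $i$ and invoking (\ref{eq21}) yields the stated bound (\ref{eq33}).

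For the equality statement, the key point to verify is that $\mathcal{D}(G)$ is irreducible. Since $G$ is connected, $d_{ij} > 0$ for all $i \neq j$, so the digraph associated with $\mathcal{D}(G)$ is complete and hence strongly connected; thus $\mathcal{D}(G)$ is a nonnegative irreducible matrix. I would then appeal to the irreducible case of Theorem \ref{T1}: if equality holds in (\ref{eq33}), then $b_{ii} + \sqrt{\sum_{k\neq i} l_k b_{ki}^2}$ is independent of $i$, which after the same simplification reads $\sqrt{(n-1)\sum_{k} d_{ki}^2} = \sqrt{(n-1)\sum_{k} d_{kj}^2}$, and hence $\sum_{k} d_{ki}^2 = \sum_{k} d_{kj}^2$ for all $i,j$.

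The proof presents no genuine obstacle; it is a clean specialization of Theorem \ref{T1}. The only points requiring care are the two bookkeeping facts that drive the simplification --- namely that connectivity forces $l_i = n-1$ uniformly and that the vanishing diagonal lets one rewrite $\sum_{k\neq i} d_{ki}^2$ as the full row-sum-of-squares $\sum_{k=1}^n d_{ki}^2$ --- together with the (routine) verification of irreducibility needed to deploy the sharper equality clause rather than the weaker one.
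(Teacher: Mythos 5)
Your proof is correct and follows essentially the same route as the paper: apply Theorem \ref{T1} to $\mathcal{D}(G)$, note $b_{ii}=0$ and $l_i=n-1$ by connectivity, and invoke the equality clause. The only difference is that you explicitly verify the irreducibility of $\mathcal{D}(G)$ before using the stronger ``for all $i,j$'' conclusion, a step the paper leaves implicit; this is a small improvement in rigor, not a different argument.
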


\begin{proof}
We apply  Theorem \ref{T1} to $\mathcal{D}(G)$. Since $b_{ii}=d_{ii}=0$,
$b_{ij}=d_{ij}\not=0$ when $i\not=j$
 and $l_i=n-1$ for $i=1,2,\ldots,n$,
 then $b_{ii}+\sqrt{\sum\limits_{k\neq i}{l_{k}b_{ki}^2}}
 =\sqrt{(n-1)\sum\limits_{k=1}^{n}{d^2_{ki}}}$ for $i=1,2,\ldots,n$,
 and thus (\ref{eq33}) holds
 by (\ref{eq21}).

It is obvious that if  the equality holds, then $\sum\limits_{k=1}^nd^2_{ki}=\sum\limits_{k=1}^nd^2_{kj}$ for any $i,j=1,2,\ldots,n$ by Theorem \ref{T1}.
\end{proof}

%\vskip2cm
%¾ÙÀý×ÓÑéÖ¤£¬ÀýÈçÍêȫͼ£¬ÍêÈ«¶þ²¿Í¼£¬È¦µÈ¡£
%\vskip2cm

\subsection{Distance (signless) Laplacian spectral radius of a graph}

\begin{theo}\label{}
Let $G=(V,E)$ be a connected graph on $n$ vertices. Then
\begin{equation}\label{eq35}
q^D(G)\leq \max\limits_{1\leq i\leq n}\bigg\{D_i+\sqrt{(n-1)\sum\limits_{k=1}^nd^2_{ki}}\bigg\},
\end{equation}
and \begin{equation}\label{eq36}
\mu^D(G)\leq \max\limits_{1\leq i\leq n}\bigg\{D_i+\sqrt{(n-1)\sum\limits_{k=1}^nd^2_{ki}}\bigg\}.
\end{equation}
Moreover, if the equality in  (\ref{eq35}) (or (\ref{eq36})) holds, then $D_i+\sqrt{(n-1)\sum\limits_{k=1}^nd^2_{ki}}=D_j+\sqrt{(n-1)\sum\limits_{k=1}^nd^2_{kj}}$ for any $i,j\in\{1,2,\ldots,n\}$.
\end{theo}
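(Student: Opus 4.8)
The plan is to apply Theorem~\ref{T1} to the distance signless Laplacian matrix $\mathcal{Q}(G)=Tr(G)+\mathcal{D}(G)$ in order to derive (\ref{eq35}), and to apply Corollary~\ref{cor25} to the distance Laplacian matrix $\mathcal{L}(G)=Tr(G)-\mathcal{D}(G)$ in order to derive (\ref{eq36}). This mirrors exactly the pattern already used in the proofs of Theorems~\ref{thm314} and~\ref{thm38}: once the entries of the relevant matrix are read off correctly, both bounds drop out of the abstract inequality (\ref{eq21}) (respectively (\ref{eq2400})).

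For the first bound I would set $B=\mathcal{Q}(G)$ and identify its entries as $b_{ii}=D_i$ and $b_{ij}=d_{ij}$ for $i\neq j$. Because $G$ is connected, every distance $d_{ij}$ with $i\neq j$ is strictly positive, so each off-diagonal entry is nonzero and hence $l_i=n-1$ for all $i$. Substituting into $b_{ii}+\sqrt{\sum_{k\neq i}l_k b_{ki}^2}$ and using $d_{ii}=0$ to fold the $k=i$ term back into the sum, this quantity becomes $D_i+\sqrt{(n-1)\sum_{k=1}^n d^2_{ki}}$, so (\ref{eq35}) follows immediately from (\ref{eq21}).

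For the second bound the point to notice is that $\mathcal{L}(G)$ is \emph{not} a nonnegative matrix, since its off-diagonal entries are $-d_{ij}\leq 0$; thus Theorem~\ref{T1} cannot be applied directly and I would instead invoke Corollary~\ref{cor25} with $A=\mathcal{L}(G)$. Here $|a_{ii}|=D_i$ (the transmissions are positive) and $|a_{ij}|=d_{ij}$, so again $l_i=n-1$, and the quantity $|a_{ii}|+\sqrt{\sum_{k\neq i}l_k|a_{ki}|^2}$ equals $D_i+\sqrt{(n-1)\sum_{k=1}^n d^2_{ki}}$. Hence (\ref{eq36}) follows from (\ref{eq2400}).

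Finally, for the equality statements I would use the irreducibility of both matrices. Since $G$ is connected, all off-diagonal distances are strictly positive, so $\mathcal{Q}(G)$ is a nonnegative matrix with every off-diagonal entry positive and therefore irreducible; the ``furthermore'' clause of Theorem~\ref{T1} then yields the claimed equality condition for all $i,j\in\{1,\ldots,n\}$. The same connectedness makes every off-diagonal entry of $\mathcal{L}(G)$ nonzero, so $\mathcal{L}(G)$ is irreducible as required by Corollary~\ref{cor25}, which already records its equality condition for all $i,j$. The only step worth checking with care is precisely this irreducibility hypothesis for $\mathcal{L}(G)$, but it is handed to us directly by connectedness, so I expect no real obstacle: the substance of the proof is entirely the bookkeeping of the entries and the observation that $\mathcal{L}(G)$ requires the absolute-value version in Corollary~\ref{cor25} rather than Theorem~\ref{T1}.
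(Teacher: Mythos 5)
Your proposal matches the paper's proof exactly: apply Theorem~\ref{T1} to $\mathcal{Q}(G)$ with $b_{ii}=D_i$, $b_{ij}=d_{ij}$, $l_i=n-1$ for (\ref{eq35}), and Corollary~\ref{cor25} to $\mathcal{L}(G)$ for (\ref{eq36}), with the equality clause coming from the irreducibility of these matrices for a connected graph. Your additional remarks on why $\mathcal{L}(G)$ needs the absolute-value version are correct and simply make explicit what the paper leaves implicit.
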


\begin{proof}
We apply Theorem \ref{T1} to $\mathcal{Q}(G)$. Since $b_{ii}=D_i$, $b_{ij}=d_{ij}$ where $i\not= j$,
 and $l_i=n-1$ for $i=1,2,\ldots,n$, then $b_{ii}+\sqrt{\sum\limits_{k\neq i}{l_{k}b_{ki}^2}}=D_i+\sqrt{(n-1)\sum\limits_{k=1}^n{d^2_{ki}}}$ for $i=1,2,\ldots,n$, and thus (\ref{eq35})  holds.

Similarly, we  apply Corollary \ref{cor25} to $\mathcal{L}(G)$ and (\ref{eq36}) holds.
\end{proof}

\section{Various spectral radii of a  digraph }
\hskip0.6cm  Let $\overrightarrow{G}$ be a connected digraph,
 the adjacency  matrix $A(\overrightarrow{G})$, the Laplacian matrix $L(\overrightarrow{G})$,
  the signless Laplacian matrix $Q(\overrightarrow{G})$,  and the (adjacency)  spectral radius $\rho(\overrightarrow{G})$,
  the Laplacian spectral radius $\mu(\overrightarrow{G})$, the signless Laplacian spectral radius $q(\overrightarrow{G})$
   are defined as Section 1.
  Let $\overrightarrow{G}$ be a connected digraph, the distance matrix $\mathcal{D}(\overrightarrow{G})$,
 the distance Laplacian matrix $\mathcal{L}(\overrightarrow{G})$, the distance signless Laplacian matrix $\mathcal{Q}(\overrightarrow{G})$,
and the distance spectral radius $\rho^{\mathcal{D}}(\overrightarrow{G})$,
 the distance Laplacian spectral radius $\mu^{\mathcal{D}}(\overrightarrow{G})$, the distance signless Laplacian spectral radius  $q^{\mathcal{D}}(\overrightarrow{G})$
 are defined as Section 1.
In this section, we will apply Theorems \ref{T1}  to $A(\overrightarrow{G})$,  $Q(\overrightarrow{G})$, $\mathcal{D}(\overrightarrow{G})$
 and $\mathcal{Q}(\overrightarrow{G})$, and apply Corollary \ref{cor25} to
 $L(\overrightarrow{G})$  and $\mathcal{L}(\overrightarrow{G})$,
to obtain some new results or known results on the spectral radius.

\subsection{Adjacency spectral radius of a digraph}

\begin{theo}(\cite{Zhang2002}, Corollary 3.2)\label{thm41}
Let $\overrightarrow{G}=(V,E)$ be a digraph on $n$ vertices. Then
\begin{equation*}
\begin{aligned}
&\rho(\overrightarrow{G})\leq \max\limits_{1\leq i\leq n}\sqrt{\sum\limits_{k\sim i}d_k^+}.
\end{aligned}
\end{equation*}
If  $\overrightarrow{G}$ is connected and the the equality holds, then
$\sum\limits_{k\sim 1}d_k^+=\sum\limits_{k\sim 2}d_k^+=\ldots=\sum\limits_{k\sim n}d_k^+$.
%$d_1^{+}m_1^{-}=d_2^{+}m_2^{-}=\ldots=d_n^{+}m_n^{-}$.
\end{theo}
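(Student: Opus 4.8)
The plan is to apply Theorem \ref{T1} directly to the adjacency matrix $B=A(\overrightarrow{G})=(a_{ij})$, exactly as was done for $A(G)$ in Theorem \ref{theo34} and for $Q(G)$ in Theorem \ref{thm38}. The whole statement should fall out as a verbatim instance of the template already established in Section 3, so the work is mostly in matching the three ingredients of the bound (\ref{eq21}) to the combinatorial data of the digraph.

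First I would identify those three ingredients. Since $\overrightarrow{G}$ is simple and has no loops, $b_{ii}=a_{ii}=0$, and each off-diagonal entry satisfies $b_{ij}=a_{ij}\in\{0,1\}$, with $a_{ij}=1$ precisely when the arc $(v_i,v_j)\in E$, i.e. when $i\sim j$. Consequently the count $l_i$ of nonzero off-diagonal entries of row $i$ is exactly the number of out-neighbors of $v_i$, so $l_i=d_i^+$.

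Next I would substitute these into the $i$th bound term. Because $a_{ki}\in\{0,1\}$ we have $a_{ki}^2=a_{ki}$, so
$$b_{ii}+\sqrt{\sum_{k=1,\,k\neq i}^{n}l_k b_{ki}^2}=\sqrt{\sum_{k=1,\,k\neq i}^{n}d_k^+ a_{ki}}=\sqrt{\sum_{k\sim i}d_k^+},$$
where the last sum ranges over those $k$ with arc $(v_k,v_i)\in E$. Taking the maximum over $i$ and invoking (\ref{eq21}) yields $\rho(\overrightarrow{G})\leq\max_{1\leq i\leq n}\sqrt{\sum_{k\sim i}d_k^+}$. For the equality case I would use that a strongly connected digraph has an irreducible adjacency matrix; then the furthermore clause of Theorem \ref{T1} forces all bound terms to coincide whenever equality holds, giving $\sum_{k\sim 1}d_k^+=\cdots=\sum_{k\sim n}d_k^+$.

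I expect no genuine obstacle here. The only points requiring a little care are the index bookkeeping—recognizing that $l_i=d_i^+$ counts out-neighbors of $v_i$ while the inner sum $\sum_k l_k b_{ki}^2$ instead collects contributions from the $k$ with $k\sim i$ (that is, arcs \emph{into} $v_i$)—and the harmless observation that squaring the $0$–$1$ entries leaves them unchanged, which is what converts $\sum_k l_k b_{ki}^2$ into the clean sum $\sum_{k\sim i}d_k^+$.
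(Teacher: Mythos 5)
Your proposal is correct and follows essentially the same route as the paper: apply Theorem \ref{T1} to $A(\overrightarrow{G})$ with $b_{ii}=0$, $l_i=d_i^+$, and read off the bound term as $\sqrt{\sum_{k\sim i}d_k^+}$, then invoke irreducibility for the equality case. Your explicit remark that the inner sum collects in-neighbors of $v_i$ while $l_k$ counts out-neighbors is a point the paper leaves implicit, but the argument is the same.
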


\begin{proof}
We apply Theorem \ref{T1} to $A(\overrightarrow{G})$. Since $b_{ii}=0$, $b_{ij}=\left\{\begin{array}{cc}
                                                                            1, & \mbox{ if } (v_i, v_j )\in E;\\
                                                                            0, & \mbox{ otherwise, }
                                                                   \end{array}\right.$
 and $l_i=d^+_i$ for $i=1,2,\ldots,n$, then $b_{ii}+\sqrt{\sum\limits_{k\neq i}{l_{k}b_{ki}^2}}
 =\sqrt{\sum\limits_{k\sim i}d_k^+}$, %=\sqrt{d^+_im^-_i}$ for $i=1,2,\ldots,n$,
 and thus $\rho(\overrightarrow{G})\leq \max\limits_{1\leq i\leq n}\sqrt{\sum\limits_{k\sim i}d_k^+}$ by (\ref{eq21}).

 It is obvious that if  $\overrightarrow{G}$ is connected and the the equality holds, then
$\sum\limits_{k\sim 1}d_k^+=\sum\limits_{k\sim 2}d_k^+=\ldots=\sum\limits_{k\sim n}d_k^+$ by  Theorem \ref{T1}.
\end{proof}

\subsection{(Signless) Laplacian spectral radius of  a digraph}

\begin{theo}\label{thm43}
Let $\overrightarrow{G}=(V,E)$ be a digraph on $n$ vertices. Then

(i) (\cite{2013}, Theorem 3.3) $q(\overrightarrow{G})\leq \max\limits_{1\leq i\leq n}\big\{d_{i}^{+}+\sqrt{\sum\limits_{j\sim i}d_j^+}\big\}.$
Moreover, if $\overrightarrow{G}$ is connected and  the equality  holds, then
$d_{1}^{+}+\sqrt{\sum\limits_{j\sim 1}d_j^+}=d_{2}^{+}+\sqrt{\sum\limits_{j\sim 2}d_j^+}=\ldots=d_{n}^{+}+\sqrt{\sum\limits_{j\sim n}d_j^+}$.

(ii) If $\overrightarrow{G}$ is connected, then $\mu(\overrightarrow{G})\leq \max\limits_{1\leq i\leq n}\big\{d_{i}^{+}+\sqrt{\sum\limits_{j\sim i}d_j^+}\big\},$
and if the equality  holds, then
$d_{1}^{+}+\sqrt{\sum\limits_{j\sim 1}d_j^+}=d_{2}^{+}+\sqrt{\sum\limits_{j\sim 2}d_j^+}=\ldots=d_{n}^{+}+\sqrt{\sum\limits_{j\sim n}d_j^+}$.
\end{theo}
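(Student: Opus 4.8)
The plan is to mirror the argument used for the undirected signless Laplacian and Laplacian in Theorem~\ref{thm38}, exploiting that the out-degree signless Laplacian $Q(\overrightarrow{G})$ is nonnegative while the Laplacian $L(\overrightarrow{G})$ is not.

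For part (i), I would apply Theorem~\ref{T1} directly to the nonnegative matrix $Q(\overrightarrow{G}) = diag(\overrightarrow{G}) + A(\overrightarrow{G})$. Writing $Q(\overrightarrow{G}) = (b_{ij})$, one has $b_{ii} = d_i^+$, $b_{ij} = 1$ precisely when $(v_i, v_j) \in E$ and $0$ otherwise, so the count of nonzero off-diagonal entries in row $i$ is $l_i = d_i^+$. The key computation is the column-oriented sum appearing in the bound: since $b_{ki} = 1$ exactly when $(v_k, v_i) \in E$, i.e. $k \sim i$, we get $\sum_{k \neq i} l_k b_{ki}^2 = \sum_{k \sim i} d_k^+ = \sum_{j \sim i} d_j^+$, hence $b_{ii} + \sqrt{\sum_{k \neq i} l_k b_{ki}^2} = d_i^+ + \sqrt{\sum_{j \sim i} d_j^+}$. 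Feeding this into (\ref{eq21}) yields the stated bound on $q(\overrightarrow{G})$. For the equality clause, I would note that strong connectivity of $\overrightarrow{G}$ makes $A(\overrightarrow{G})$ irreducible, and adding the diagonal matrix $diag(\overrightarrow{G})$ leaves the off-diagonal zero pattern unchanged, so $Q(\overrightarrow{G})$ is irreducible; the ``furthermore'' assertion of Theorem~\ref{T1} then forces $d_i^+ + \sqrt{\sum_{j \sim i} d_j^+}$ to be constant over all $i \in \{1, \ldots, n\}$.

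For part (ii), the obstruction is that $L(\overrightarrow{G}) = diag(\overrightarrow{G}) - A(\overrightarrow{G})$ has off-diagonal entries equal to $-1$ and is therefore not nonnegative, so Theorem~\ref{T1} does not apply. Here I would instead invoke Corollary~\ref{cor25}, which handles an arbitrary complex irreducible matrix by passing to absolute values. Strong connectivity again guarantees that $L(\overrightarrow{G})$ is irreducible, since its off-diagonal pattern coincides with that of $A(\overrightarrow{G})$. Writing $L(\overrightarrow{G}) = (a_{ij})$, we have $|a_{ii}| = d_i^+$, $|a_{ij}| = 1$ exactly when $i \sim j$, and $l_i = d_i^+$, which are identical to the data in part (i) after taking moduli. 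Consequently $|a_{ii}| + \sqrt{\sum_{k \neq i} l_k |a_{ki}|^2} = d_i^+ + \sqrt{\sum_{j \sim i} d_j^+}$, and (\ref{eq2400}) delivers the bound on $\mu(\overrightarrow{G})$, while the equality clause of Corollary~\ref{cor25} supplies the constancy statement.

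Since both parts reduce to substituting the matrix data into results already established, I do not expect a genuine obstacle. The one point requiring care is bookkeeping of arc orientation: the bound in Theorem~\ref{T1} sums the column index $b_{ki}$, so the relevant neighbors of $v_i$ are its in-neighbors $\{k : k \sim i\}$ rather than its out-neighbors, which is exactly why the right-hand side features $\sum_{j \sim i} d_j^+$. The only other subtlety is recognizing in advance that the Laplacian case must route through Corollary~\ref{cor25} rather than Theorem~\ref{T1}, precisely because $L(\overrightarrow{G})$ fails to be nonnegative.
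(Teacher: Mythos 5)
Your proposal is correct and follows essentially the same route as the paper: part (i) is Theorem~\ref{T1} applied to $Q(\overrightarrow{G})$ with $b_{ii}=d_i^+$, $l_i=d_i^+$, and the column sum $\sum_{k\neq i}l_k b_{ki}^2=\sum_{j\sim i}d_j^+$, with irreducibility giving the equality clause; part (ii) is the same computation routed through Corollary~\ref{cor25} because $L(\overrightarrow{G})$ is not nonnegative. Your explicit remark about the orientation bookkeeping (the bound sums over in-neighbors $k\sim i$ weighted by out-degrees $d_k^+$) is a point the paper leaves implicit, but the argument is identical.
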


\begin{proof}
We apply Theorem \ref{T1} to $Q(\overrightarrow{G})$.
Since $b_{ii}=d^+_i$, $b_{ij}=\left\{\begin{array}{cc}
                                                                     1, & \mbox{ if } (v_i, v_j )\in E; \\
                                                                     0, & \mbox{ otherwise, }
                                                                   \end{array}\right.$
 and $l_i=d^+_i$ for $i=1,2,\ldots,n$, then $b_{ii}+\sqrt{\sum\limits_{k\neq i}{l_{k}b_{ki}^2}}=d_{i}^{+}+\sqrt{\sum\limits_{j\sim i}d_j^+}$ for $i=1,2,\ldots,n$,
 and thus $q(\overrightarrow{G})\leq \max\limits_{1\leq i\leq n}\big\{d_{i}^{+}+\sqrt{\sum\limits_{j\sim i}d_j^+}\big\}$ by (\ref{eq21}).

 It is obvious that if  $\overrightarrow{G}$ is connected and the equality holds then
 $d_{1}^{+}+\sqrt{\sum\limits_{j\sim 1}d_j^+}=d_{2}^{+}+\sqrt{\sum\limits_{j\sim 2}d_j^+}=\ldots=d_{n}^{+}+\sqrt{\sum\limits_{j\sim n}d_j^+}$
 by Theorem \ref{T1}.

Similar to the proof of (i), we can show (ii) immediately by Corollary \ref{cor25}, so we omit it.
\end{proof}

\subsection{Distance spectral radius of  a digraph }
\begin{theo}\label{thm46}
Let $\overrightarrow{G}=(V,E)$ be a strong connected digraph on $n$ vertices. Then
\begin{equation}\label{eq43}
\begin{aligned}
&\rho^D(\overrightarrow{G})\leq \max\limits_{1\leq i\leq n}\sqrt{(n-1)\sum\limits_{k=1}^n d_{ki}^2}.
\end{aligned}
\end{equation}
If the equality holds, then $\sum\limits_{k=1}^n d^2_{ki}=\sum\limits_{k=1}^n d^2_{kj}$ for any $i,j\in\{1,2,\ldots,n\}$.\\
\end{theo}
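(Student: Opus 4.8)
The plan is to apply Theorem \ref{T1} directly to the distance matrix $\mathcal{D}(\overrightarrow{G})$ of the strong connected digraph, in exact analogy with the proof of Theorem \ref{thm314} for the undirected case. Since $\overrightarrow{G}$ is strongly connected, every ordered pair of distinct vertices is joined by a directed path, so the finite distance $d_{\overrightarrow{G}}(v_i,v_j)$ is well defined and strictly positive for $i\neq j$; hence $\mathcal{D}(\overrightarrow{G})=(d_{ij})$ is a nonnegative matrix whose off-diagonal entries are all nonzero and whose diagonal entries vanish ($d_{ii}=0$).

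The key identifications are as follows. Setting $B=\mathcal{D}(\overrightarrow{G})$ in Theorem \ref{T1}, I would note that $b_{ii}=d_{ii}=0$ for every $i$, and $b_{ij}=d_{ij}\neq 0$ whenever $i\neq j$, so the number of nonzero off-diagonal entries in each row is $l_i=n-1$ for all $i=1,2,\ldots,n$. Substituting these into the bound of Theorem \ref{T1} gives
$$b_{ii}+\sqrt{\sum\limits_{k=1,\,k\neq i}^{n}l_k b_{ki}^2}=0+\sqrt{(n-1)\sum\limits_{k=1,\,k\neq i}^{n}d_{ki}^2}=\sqrt{(n-1)\sum\limits_{k=1}^{n}d_{ki}^2},$$
where the last equality uses $d_{ii}=0$ to extend the sum harmlessly to include $k=i$. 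Taking the maximum over $i$ and invoking inequality (\ref{eq21}) yields (\ref{eq43}) immediately.

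For the equality case, I would simply quote the moreover-clause of Theorem \ref{T1}: whenever the bound is attained, the quantity $b_{ii}+\sqrt{\sum_{k\neq i}l_k b_{ki}^2}$ takes a common value across all indices $i$ in the support of the Perron eigenvector. Because $\mathcal{D}(\overrightarrow{G})$ is nonnegative and irreducible (strong connectivity of the digraph forces irreducibility of the distance matrix, as every vertex can reach every other vertex), the Perron–Frobenius theorem guarantees the corresponding eigenvector is entrywise positive, so the common-value condition holds for all $i,j\in\{1,2,\ldots,n\}$. Translating the identification above, this reads $\sum_{k=1}^n d_{ki}^2=\sum_{k=1}^n d_{kj}^2$ for all $i,j$, which is exactly the asserted necessary condition.

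I do not anticipate a genuine obstacle here: the proof is a routine specialization of Theorem \ref{T1}, and the only point requiring a word of care is the transition from ``equality holds on the support of $X$'' to ``equality holds for all indices,'' which is handled by observing that irreducibility of $\mathcal{D}(\overrightarrow{G})$ (a direct consequence of strong connectivity) makes the Perron eigenvector strictly positive. The directed analogue therefore mirrors the undirected Theorem \ref{thm314} verbatim, with $d_{ij}=d_{\overrightarrow{G}}(v_i,v_j)$ now the directed distances rather than the symmetric ones.
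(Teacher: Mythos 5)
Your proposal is correct and follows essentially the same route as the paper: both apply Theorem \ref{T1} to $\mathcal{D}(\overrightarrow{G})$ with the identifications $b_{ii}=0$, $b_{ij}=d_{ij}\neq 0$ for $i\neq j$, and $l_i=n-1$, and both derive the equality condition from the irreducibility clause of Theorem \ref{T1}. Your explicit justification that strong connectivity makes the distance matrix irreducible (hence the Perron eigenvector positive) is a point the paper leaves implicit, but the argument is the same.
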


\begin{proof}
We apply Theorem \ref{T1} to $\mathcal{D}(\overrightarrow{G})$. Since $b_{ii}=d_{ii}=0$, $b_{ij}=d_{ij}\not=0,$
 and $l_i=n-1$ for any $i=1,2,\ldots,n$, then $b_{ii}+\sqrt{\sum\limits_{k\neq i}{l_{k}b_{ki}^2}}=\sqrt{(n-1)\sum\limits_{k=1}^n{d^2_{ki}}}$ for $i=1,2,\ldots,n$,
 and thus (\ref{eq43}) holds by (\ref{eq21}).

 It is easy that if the equality holds, then $\sum\limits_{k=1}^n d^2_{ki}=\sum\limits_{k=1}^n d^2_{kj}$ for any $i,j\in\{1,2,\ldots,n\}$.
\end{proof}

\subsection{Distance (signless) Laplacian spectral radius of   a digraph}

\begin{theo}\label{thm48}
Let $\overrightarrow{G}=(V,E)$ be a strong connected digraph on $n$ vertices. Then
\begin{equation}\label{eq45}
q^D(\overrightarrow{G})\leq \max\limits_{1\leq i\leq n}\bigg\{D^+_i+\sqrt{(n-1)\sum\limits_{k=1}^n d^2_{ki}}\bigg\},
\end{equation}
and \begin{equation}\label{eq46}
\mu^D(\overrightarrow{G})\leq \max\limits_{1\leq i\leq n}\bigg\{D^+_i+\sqrt{(n-1)\sum\limits_{k=1}^n d^2_{ki}}\bigg\}.
\end{equation}
Moreover, if the equality in (\ref{eq45})  (or (\ref{eq46})) holds then $D^+_i+\sqrt{(n-1)\sum\limits_{k=1}^nd^2_{ki}}=D^+_j+\sqrt{(n-1)\sum\limits_{k=1}^n d^2_{kj}}$  for any $i,j\in\{1,2,\ldots,n\}$.
\end{theo}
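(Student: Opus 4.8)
The plan is to mirror exactly the structure of the graph-case proof in Subsection 3.4, applying Theorem \ref{T1} to the distance signless Laplacian matrix $\mathcal{Q}(\overrightarrow{G})$ for the bound (\ref{eq45}), and Corollary \ref{cor25} to the distance Laplacian matrix $\mathcal{L}(\overrightarrow{G})$ for the bound (\ref{eq46}). The only genuinely digraph-specific point to keep in mind is that $\mathcal{D}(\overrightarrow{G})$ need not be symmetric, so the column sums $\sum_{k=1}^n d_{ki}^2$ appearing in the bound are over the $i$-th column and must not be silently replaced by row sums.

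First I would treat $\mathcal{Q}(\overrightarrow{G}) = Tr(\overrightarrow{G}) + \mathcal{D}(\overrightarrow{G})$, which is a nonnegative matrix, so Theorem \ref{T1} applies directly. Reading off the entries, $b_{ii} = D_i^+$ and $b_{ij} = d_{ij}$ for $i \neq j$. Because $\overrightarrow{G}$ is strongly connected, every off-diagonal distance satisfies $d_{ij} > 0$, hence $l_i = n-1$ for all $i$. Substituting into the bound of Theorem \ref{T1} and using $d_{ii}=0$ to extend the summation index to all $k$ gives
\[
b_{ii}+\sqrt{\sum_{k\neq i} l_k b_{ki}^2} = D_i^+ + \sqrt{(n-1)\sum_{k=1}^n d_{ki}^2},
\]
which yields (\ref{eq45}) immediately via (\ref{eq21}).

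Next I would handle $\mathcal{L}(\overrightarrow{G}) = Tr(\overrightarrow{G}) - \mathcal{D}(\overrightarrow{G})$. This matrix is \emph{not} nonnegative, so Theorem \ref{T1} cannot be invoked directly; instead I would apply Corollary \ref{cor25}, which requires only that the matrix be complex and irreducible. Irreducibility holds because strong connectivity forces every off-diagonal entry $-d_{ij}$ to be nonzero. With $|a_{ii}| = D_i^+$ and $|a_{ij}| = d_{ij}$, the same computation as above (now with $l_i = n-1$ and the absolute-value form of the bound in (\ref{eq2400})) produces $|a_{ii}| + \sqrt{\sum_{k\neq i} l_k |a_{ki}|^2} = D_i^+ + \sqrt{(n-1)\sum_{k=1}^n d_{ki}^2}$, giving (\ref{eq46}). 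The stated equality condition in each case is read off from the equality clause of Theorem \ref{T1} (respectively Corollary \ref{cor25}): equality forces the defining quantity $D_i^+ + \sqrt{(n-1)\sum_{k=1}^n d_{ki}^2}$ to be constant across all indices $i,j\in\{1,2,\ldots,n\}$.

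There is essentially no hard step here, since the result is a direct specialization of the general matrix bounds already established; the only item demanding care is the justification that $l_i = n-1$ and that $\mathcal{L}(\overrightarrow{G})$ is irreducible, both of which rest on strong connectivity guaranteeing $d_{ij}>0$ for all $i\neq j$. Accordingly I expect the write-up to be brief, and one may legitimately abbreviate the second half by noting its parallel with part (i), exactly as the graph-case proofs do.
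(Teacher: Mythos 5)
Your proposal is correct and follows essentially the same route as the paper: apply Theorem \ref{T1} to $\mathcal{Q}(\overrightarrow{G})$ with $b_{ii}=D_i^+$, $b_{ij}=d_{ij}$ and $l_i=n-1$, then invoke Corollary \ref{cor25} for $\mathcal{L}(\overrightarrow{G})$, with the equality clauses read off from those results. Your added remarks --- that strong connectivity guarantees $d_{ij}>0$ (hence $l_i=n-1$ and irreducibility) and that the column sums $\sum_{k=1}^n d_{ki}^2$ must not be confused with row sums since $\mathcal{D}(\overrightarrow{G})$ need not be symmetric --- are worthwhile clarifications that the paper leaves implicit.
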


\begin{proof}
We apply Theorem \ref{T1} to $\mathcal{Q}(\overrightarrow{G})$. Since $b_{ii}=D^+_i$,
$b_{ij}=d_{ij}\not=0$ for all $i\not=j$, $b_{ii}=d_{ii}=0$,
 and $l_i=n-1$ for $i=1,2,\ldots,n$, then $b_{ii}+\sqrt{\sum\limits_{k\neq i}{l_{k}b_{ki}^2}}=D^+_i+\sqrt{(n-1)\sum\limits_{k=1}^n{d^2_{ki}}}$ for $i=1,2,\ldots,n$,
 and thus (\ref{eq45}) holds by (\ref{eq21}). By Corollary \ref{cor25} and (i), (\ref{eq46}) holds.

It is easy that if the equality in (\ref{eq45})  (or (\ref{eq46})) holds then $D^+_i+\sqrt{(n-1)\sum\limits_{k=1}^nd^2_{ki}}=D^+_j+\sqrt{(n-1)\sum\limits_{k=1}^n d^2_{kj}}$  for any $i,j\in\{1,2,\ldots,n\}$ by Theorem \ref{T1}.
\end{proof}

\end{document}